\numberwithin{equation}{section}
\newtheorem{theorem}{Theorem}[section]
\theoremstyle{plain}
\newtheorem{lemma}[theorem]{Lemma}
\theoremstyle{plain}
\newtheorem{proposition}[theorem]{Proposition}
\theoremstyle{plain}
\newtheorem{definition}[theorem]{Definition}
\theoremstyle{definition}
\newtheorem{remark}[theorem]{Remark}
\newtheorem{example}[theorem]{Example}
\newcommand{\N}{{\mathbb N}}
\newcommand{\R}{{\mathbb R}}
\newcommand{\eps}{\varepsilon}
\newcommand{\beq}{\begin{equation}}
\newcommand{\eeq}{\end{equation}}
\renewcommand{\le}{\leqslant}
\renewcommand{\ge}{\geqslant}
\newcommand{\w}{W^{s,p}_0(\Omega)}
\newcommand{\fpl}{(-\Delta)_p^s\,}
\newcommand{\ds}{{\rm d}_\Omega^s}
\newcommand{\cs}{C_s^0(\overline\Omega)}
\newenvironment{enumroman}{\begin{enumerate}

}{\end{enumerate}}
\title[Superlinear fractional $p$-Laplacian]{Multiple solutions for superlinear fractional $p$-Laplacian equations}
\author[A.\ Iannizzotto, V.\ Staicu, V.\ Vespri]{Antonio Iannizzotto, Vasile Staicu, Vincenzo Vespri}
\address[A.\ Iannizzotto, corresponding author]{Department of Mathematics and Computer Science
\newline\indent
University of Cagliari
\newline\indent
Via Ospedale 72, 09124 Cagliari, Italy}
\email{antonio.iannizzotto@unica.it}
\address[V.\ Staicu]{CIDMA - Center for Research and Development in Mathematics and Applications
\newline\indent
Department of Mathematics
\newline\indent
University of Aveiro
\newline\indent
3810-193 Aveiro, Portugal}
\email{vasile@ua.pt}
\address[V.\ Vespri]{Department of Mathematics and Computer Science ''U.\ Dini''
\newline\indent
University of Florence
\newline\indent
Viale Morgagni 67/A, 50134 Firenze, Italy}
\email{vincenzo.vespri@unifi.it}
\subjclass[2010]{35A15, 35R11, 58E05}
\keywords{Fractional $p$-Laplacian, Variational Methods, Morse theory}
\begin{document}

\begin{abstract}
We study a Dirichlet problem driven by the (degenerate or singular) fractional $p$-Laplacian and involving a $(p-1)$-superlinear reaction at infinity, not necessarily satisfying the Ambrosetti-Rabinowitz condition. Using critical point theory, truncation, and Morse theory, we prove the existence of at least three nontrivial solutions to the problem.
\end{abstract}

\maketitle

\begin{center}
Version of \today\
\end{center}

\section{Introduction and main result}\label{sec1}

\noindent
In this paper we study a nonlinear partial differential equation of fractional order, coupled with a nonlocal Dirichlet condition:
\beq\label{dir}
\begin{cases}
\fpl u = f(x,u) & \text{in $\Omega$} \\
u = 0 & \text{in $\Omega^c$.}
\end{cases}
\eeq
The data of \eqref{dir} are as follows: $\Omega\subset\R^N$ ($N\ge 2$) is a bounded domain with $C^{1,1}$-smooth boundary $\partial\Omega$, $p>1$ and $s\in(0,1)$ are real s.t.\ $ps<N$, and $f:\Omega\times\R\to\R$ is a Carath\'eodory mapping with $(p-1)$-superlinear growth. The leading operator is the $s$-fractional $p$-Laplacian, heuristically defined for all $u:\R^N\to\R$ smooth enough by
\[\fpl u(x) = 2\lim_{\eps\to 0^+}\int_{B^c_\eps(x)}\frac{|u(x)-u(y)|^{p-2}(u(x)-u(y))}{|x-y|^{N+ps}}\,dy.\]
Nonlinear elliptic equations with superlinear reactions represent a challenging subject in nonlinear analysis, because the corresponding energy functional is unbounded from below, hence one cannot find solutions by the direct method of the calculus of variations. Minimization is then replaced by convenient {\em min-max} schemes, according to the general definition of \cite{R}, which often lead to multiplicity results.
\vskip2pt
\noindent
The study of superlinear equations started with \cite{AR}, where existence of {\em two} nontrivial solutions was proved via the mountain pass theorem for a semilinear elliptic equation whose reaction is superlinear at infinity and satisfies the Ambrosetti-Rabinowitz condition, while infinitely many solutions appear if the reaction is odd, thanks to the symmetric mountain pass theorem. Later, in \cite{W} the existence of {\em three} nontrivial solutions (with sign information) was shown under the same hypotheses via the application of Morse theory for $C^2$-functionals and the separate study of truncated reactions. No symmetry conditions were involved.
\vskip2pt
\noindent
Since then, a vast literature has developed on superlinear problems, using a number of variational techniques to prove multiplicity results. For short, we only list some results related to fractional order equations: \cite{A,GZZ,CW} deal with odd superlinear reactions, \cite{DI,IL,MP,XZR,ZF,ZCC} with superlinear reactions at infinity without symmetry conditions (on the contrary, asymmetric behaviors are sometimes exploited to prove multiplicity), and \cite{AI,FMBS} consider problems with critical growth, which are a special case of superlinear problems. In particular, \cite[Theorem 4.3]{DI} is a formal analogue of the fine result of \cite{W} for the fractional Laplacian ($p=2$).
\vskip2pt
\noindent
The aim of the present paper is to extend the result of \cite{W} (and henceforth of \cite{DI}) to the case of the fractional $p$-Laplacian, with any $p>1$. Before presenting the precise statement of our main result, let us briefly revise the theoretical framework of variational methods applied to fractional elliptic equations (see \cite{MBRS} for a general introduction). When compared to classical elliptic boundary value problems, fractional problems present a very similar variational structure, with the fractional Sobolev space $\w$ replacing $W^{1,p}_0(\Omega)$ (see \cite{DNPV,L}). The most significant differences occur in regularity of solutions (especially boundary regularity) and maximum-comparison principles:
\begin{itemize}[leftmargin=1cm]
\item[$(a)$] While fairly regular in the interior of the domain, weak solutions to fractional Dirichlet problems may fail to be differentiable at the boundary (see \cite{ROS} for the linear case), hence boundary regularity for this kind of problems cannot aim at a $C^{1,\alpha}$-regularity like in the case of the $p$-Laplacian (see \cite{L1}). Nevertheless, such property is effectively replaced by a form of fine (or weighted) boundary regularity, namely, whenever $u$ solves \eqref{dir}, the quotient $u/\ds(x)$ (${\rm d}_\Omega$ denoting the distance from the boundary) is H\"older continuous up to the boundary. This result was proved in \cite{IM} for all $p>1$. See also the recent contributions \cite{CDI,DIV} for interior regularity.
\item[$(b)$] The strong maximum and comparison principles are, surprisingly, easier to obtain in the fractional case with respect to the classical $p$-Laplacian (see \cite{GV,V}). For instance, it is proved in \cite{IMP} that any non-negative, non-zero solution of \eqref{dir} lies in the interior of the positive order cone of a convenient weighted space $\cs$, incorporating a fractional version of Hopf's boundary point property. Note that no integral condition is imposed to control the behavior of the reaction near the origin, as was the case for the classical $p$-Laplacian, see \cite{V}. This, in turn, leads to the comparison between local minimizers of the energy functional in the topologies of $\w$ and $\cs$, which can be proved to coincide, see \cite{IM1,IMS}.
\end{itemize}
The issues above are especially relevant in the study of problem \eqref{dir} with $(p-1)$-superlinear reactions. In order to distinguish positive, negative, and general solutions we introduce several truncations of the reaction and we need to separate the critical points of the corresponding truncated energy functionals. This is possible through a delicate interplay between the $\w$ and $\cs$-topologies, respectively, and making essential use of the recent results on regularity and maximum principles for the fractional $p$-Laplacian.
\vskip2pt
\noindent
We will study problem \eqref{dir} under the following hypotheses (where we denote $p^*_s=Np/(N-ps)$ the fractional Sobolev exponent):
\begin{itemize}[leftmargin=.6cm]
\item[${\bf H.}$] $f:\Omega\times\R\to\R$ is a Carath\'eodory mapping, we set for all $(x,t)\in\Omega\times\R$
\[F(x,t) = \int_0^t f(x,\tau)\,d\tau,\]
and assume that the following conditions are satisfied:
\begin{enumroman}
\item\label{h1} there exist $c_0>0$, $r\in(p,p^*_s)$ s.t.\ for a.e.\ $x\in\Omega$ and all $t\in\R$
\[|f(x,t)| \le c_0(1+|t|^{r-1});\]
\item\label{h2} uniformly for a.e.\ $x\in\Omega$
\[\lim_{|t|\to\infty}\frac{F(x,t)}{|t|^p} = \infty;\]
\item\label{h3} there exists $q\in\big(\frac{N}{ps}(r-p),p^*_s\big)$ s.t.\ uniformly for a.e.\ $x\in\Omega$
\[\liminf_{|t|\to\infty}\frac{f(x,t)t-pF(x,t)}{|t|^q} > 0;\]
\item\label{h4} uniformly for a.e.\ $x\in\Omega$
\[\lim_{t\to 0}\frac{f(x,t)}{|t|^{p-2}t} = 0.\]
\end{enumroman}
\end{itemize}
Our result is the following:

\begin{theorem}\label{main}
Let ${\bf H}$ hold. Then, problem \eqref{dir} has at least three nontrivial solutions, $u_+>0$, $u_-<0$, and $\tilde u\neq 0$.
\end{theorem}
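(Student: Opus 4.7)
The plan is to combine mountain pass arguments applied to sign-preserving truncations of the reaction with Morse-theoretic tools, following the blueprint of \cite{W} (and its fractional semilinear counterpart \cite{DI}) and leaning on the regularity, strong maximum principle, and $\w$ vs.\ $\cs$ comparison from \cite{IM, IMP, IMS}. I would introduce the natural energy functional $\Phi\in C^1(\w)$ associated to \eqref{dir}, together with the truncated functionals $\Phi_\pm\in C^1(\w)$ corresponding to the nonlinearities $f_\pm(x,t)=f(x,\pm t^\pm)$. Assumption \ref{h1} combined with the fractional Sobolev embedding ensures $C^1$-regularity, \ref{h4} makes $0$ a strict local minimizer of $\Phi$ and of $\Phi_\pm$, while \ref{h2}--\ref{h3} jointly imply that all three functionals satisfy the Cerami condition, with \ref{h3} taking over the role of the classical Ambrosetti--Rabinowitz assumption (cf.\ \cite{IL}).

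I would then produce the two constant-sign solutions. For $\Phi_+$ the origin is a strict local minimum and, by \ref{h2}, $\Phi_+(t\hat u)\to-\infty$ as $t\to+\infty$ along any positive $\hat u\in\w$. The mountain pass theorem thus yields a critical point $u_+\neq 0$ of $\Phi_+$; testing the corresponding equation with the negative part $u_+^-$ and invoking the strong maximum principle and Hopf-type property of \cite{IMP} forces $u_+$ into the interior of the positive $\cs$-cone. Consequently $f_+(\cdot,u_+)=f(\cdot,u_+)$ and $u_+$ is a positive solution of \eqref{dir}. The symmetric argument applied to $\Phi_-$ yields the negative solution $u_-$ in the interior of the negative $\cs$-cone.

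To produce the third nontrivial solution I would turn to Morse theory applied directly to $\Phi$. Assumption \ref{h4} gives $C_k(\Phi,0)=\delta_{k,0}\,\Z$. Since $u_\pm$ lies in the interior of the positive/negative $\cs$-cone, the functionals $\Phi$ and $\Phi_\pm$ agree on a full $\cs$-neighborhood of $u_\pm$; combining this with the equivalence of $\w$- and $\cs$-local minimizers (and, more generally, of local critical groups) from \cite{IM1, IMS}, the mountain pass characterization transfers from $\Phi_\pm$ to $\Phi$, yielding $C_1(\Phi,u_\pm)\neq 0$. In parallel, the Cerami condition together with the superlinear growth \ref{h2} allows one to compute the critical groups at infinity (via a radial deformation of the deep sublevel sets $\{\Phi\le c\}$, $c\ll 0$) and check that they are incompatible with having $\{0,u_+,u_-\}$ as the whole critical set: the Morse identity
\[\sum_{u}\sum_{k\ge 0}(-1)^k\mathrm{rank}\,C_k(\Phi,u)=\sum_{k\ge 0}(-1)^k\mathrm{rank}\,C_k(\Phi,\infty)\]
then forces a numerical contradiction, so a fourth critical point $\tilde u\notin\{0,u_+,u_-\}$ must exist.

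The main obstacle will be the Morse computation at $u_\pm$: since $\Phi$ is merely $C^1$, Morse indices are unavailable and the critical groups have to be extracted from the mountain pass characterization. Transferring $C_1(\Phi_\pm,u_\pm)\neq 0$ to $C_1(\Phi,u_\pm)\neq 0$ hinges on the interior Hopf property of \cite{IMP} combined with the $\w$-vs-$\cs$ topological comparison of \cite{IM1, IMS}. Verifying the Cerami condition under the weakened assumption \ref{h3} in place of Ambrosetti--Rabinowitz, and a sufficiently precise computation of $C_k(\Phi,\infty)$, are the further technical points that require delicate handling.
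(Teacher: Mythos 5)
Your overall strategy coincides with the paper's: truncations $f_\pm$, mountain pass for $\Phi_\pm$ combined with the strong maximum principle of \cite{IMP} to get $u_\pm\in\pm{\rm int}(\cs_+)$, then a contradiction via critical groups and the Poincar\'e--Hopf formula. The constant-sign part and the computations of $C_k(\Phi,0)$ and $C_k(\Phi,\infty)$ are in line with what the paper does. However, there are two genuine gaps in your treatment of the critical groups at $u_\pm$, which is precisely the delicate step.

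First, you propose to transfer information from $\Phi_\pm$ to $\Phi$ on the grounds that the two functionals ``agree on a full $\cs$-neighborhood of $u_\pm$.'' This is true but does not by itself give $C_k(\Phi,u_+)=C_k(\Phi_+,u_+)$: critical groups are computed with respect to the $\w$-topology, and a $\w$-neighborhood of $u_+$ is not contained in any $\cs$-neighborhood, so the sublevel sets entering the definition of the critical groups genuinely differ. The paper's device is different: it introduces the homotopy $\Psi_\tau=(1-\tau)\Phi+\tau\Phi_+$, proves that $u_+$ is a critical point of every $\Psi_\tau$ which is isolated \emph{uniformly} in $\tau$ (this is where the a priori bounds, the $C^\alpha_s$-regularity, the compact embedding into $\cs$, and the interior cone property are used, to show that $\w$-nearby critical points of $\Psi_{\tau_n}$ converge to $u_+$ in $\cs$ and hence are eventually positive, contradicting \eqref{abs}), and then invokes the homotopy invariance of critical groups (Proposition \ref{inv}). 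Second, even granting the transfer, knowing only $C_1(\Phi,u_\pm)\neq 0$ from a mountain pass characterization is not enough to run the Poincar\'e--Hopf identity, which requires the \emph{full} alternating sum of dimensions. The paper instead computes $C_k(\Phi_+,u_+)=\delta_{k,1}\R$ exactly, not from the mountain pass characterization but from the long exact sequence of the pair $(\Phi_+^a,\Phi_+^b)$ together with the facts that $K(\Phi_+)=\{0,u_+\}$ under \eqref{abs}, $C_k(\Phi_+,0)=\delta_{k,0}\R$, and $C_k(\Phi_+,\infty)=0$. You correctly flag this step as the main obstacle, but the mechanism you sketch for overcoming it would not close the argument as stated.
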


\noindent
For a precise definition of solution we refer to Section \ref{sec2} below. We note that hypotheses ${\bf H}$ \ref{h2} \ref{h4} agree with a $(p-1)$-superlinear behavior of $f$ both at infinity and at the origin, while ${\bf H}$ \ref{h3} is a generalization of the Ambrosetti-Rabinowitz condition, first introduced in \cite{CM} for semilinear elliptic equations. The latter is essential, along with the subcritical growth condition \ref{h1}, to ensure that the energy functional of problem \eqref{dir} satisfies the Cerami compactness condition at any level. Again, no symmetry condition is required.
\vskip2pt
\noindent
The proof of Theorem \ref{main} is naturally split in two parts. In the first part, we detect the constant sign solutions $u_\pm$, by applying the mountain pass theorem to convenient truncations of the energy functional: in this argument we make a major use of the aforementioned results from \cite{IMP,IM1}. In the second part, we seek a third solution $\tilde u\neq 0$ arguing by contradiction and computing all critical groups at $u_\pm$, at the origin, and at infinity, respectively, of the energy functional: this method is an adaptation of \cite{W} to the generalized $C^1$-Morse theory displayed in \cite{MMP}. We remark that this is the {\em first} result of this type for fractional $p$-laplacian equations including the singular case ($1<p<2$).

\begin{example}\label{ex1}
Fix $q\in(p,p^*_s)$ and set for all $t\in\R$
\[f(t) = |t|^{q-2}t,\]
then it is easily seen that the (autonomous) mapping $f\in C(\R)$ satisfies ${\bf H}$ (with $r=q$), and it also satisfies the classical Ambrosetti-Rabinowitz condition. Alternatively, we may set for all $t\in\R$
\[F(t) = \begin{cases}
0 & \text{if $t=0$} \\
\displaystyle\frac{|t|^q}{q}\ln|t| & \text{if $0<|t|<1$} \\
\displaystyle\frac{|t|^p}{q}\ln|t| & \text{if $|t|\ge 1$.}
\end{cases}\]
So $F\in C^1(\R)$ and $f=F'$ satisfies ${\bf H}$ (with any $r\in(q,p^*_s)$), but it does not satisfy the Ambrosetti-Rabinowitz condition. Both reactions considered here are odd, but one can easily think of different mappings without symmetries.
\end{example}

\noindent
The structure of the paper is the following: in Section \ref{sec2} we recall some preliminary results on the Dirichlet problem for the fractional $p$-Laplacian (Subsection \ref{ss21}) and general Morse theory (Subsection \ref{ss22}); in Section \ref{sec3} we deal with constant sign solutions; and in Section \ref{sec4} we prove existence of the third nontrivial solution.
\vskip4pt
\noindent
{\bf Notations.} Throughout the paper, for any $\Omega\subset\R^N$ we shall set $\Omega^c=\R^N\setminus\Omega$ and denote by $|\Omega|$ the $N$-dimensional Lebesgue measure of $\Omega$. For any two measurable functions $u,v:\Omega\to\R$, $u\le v$ in $\Omega$ will mean that $u(x)\le v(x)$ for a.e.\ $x\in\Omega$ (and similar expressions). The positive (resp., negative) part of $u$ is denoted $u^+$ (resp., $u^-$). If $X$ is an ordered Banach space, then $X_+$ will denote its non-negative order cone and $B_r(x)$, $\overline{B}_r(x)$, $\partial B_r(x)$ will be the open ball, the closed ball, and the sphere, respectively, centered at $x$ with radius $r>0$. For all $r\in[1,\infty]$, $\|\cdot\|_r$ denotes the standard norm of $L^r(\Omega)$ (or $L^r(\R^N)$, which will be clear from the context). Every function $u$ defined in $\Omega$ will be identified with its $0$-extension to $\R^N$. Moreover, $C$ will denote a positive constant (whose value may change case by case).

\section{Preliminaries}\label{sec2}

\noindent
For the reader's convenience, we collect here some known definitions and results that will be used in the forthcoming arguments.

\subsection{The Dirichlet problem for the fractional $p$-Laplacian}\label{ss21}

First we recall some notions on the Dirichlet problem \eqref{dir}, referring to \cite{DNPV,L,P} for a detailed account. Let $p$, $s$ be as in Section \ref{sec1}. For all open $\Omega\subseteq\R^N$ and all measurable $u:\Omega\to\R$ we define the Gagliardo seminorm
\[[u]_{s,p,\Omega} = \Big[\iint_{\Omega\times\Omega}\frac{|u(x)-u(y)|^p}{|x-y|^{N+ps}}\,dx\,dy\Big]^\frac{1}{p}.\]
We define the fractional Sobolev space
\[W^{s,p}(\Omega) = \big\{u\in L^p(\Omega):\,[u]_{s,p,\Omega}<\infty\big\}.\]
Let now $\Omega$ be as in Section \ref{sec1}. The space
\[\w = \big\{u\in W^{s,p}(\R^N):\,u=0 \ \text{in $\Omega^c$}\big\}\]
is a uniformly convex, separable Banach space with norm $\|u\|_{s,p}=[u]_{s,p,\R^N}$ and dual space $W^{-s,p'}(\Omega)=(\w)^*$. The space $\w$ is continuously embedded into $L^q(\Omega)$ for all $q\in[1,p^*_s]$ and compactly embedded into $L^q(\Omega)$ for all $q\in[1,p^*_s)$. We will also use the following topological vector space:
\[\widetilde{W}^{s,p}(\Omega) = \Big\{u\in L^p_{\rm loc}(\R^N):\,u\in W^{s,p}(U) \ \text{for some $U\Supset\Omega$,} \ \int_{\R^N}\frac{|u(x)|^{p-1}}{(1+|x|)^{N+ps}}\,dx<\infty\Big\}.\]
The $s$-fractional $p$-Laplacian can be defined as a continuous operator $\fpl:\widetilde{W}^{s,p}(\Omega)\to W^{-s,p'}(\Omega)$ by setting for all $u\in\widetilde{W}^{s,p}(\Omega)$, $\varphi\in\w$
\[\langle\fpl u,\varphi\rangle = \iint_{\R^N \times \R^N} \frac{|u(x)-u(y)|^{p-2} (u(x)-u(y)) (\varphi(x)-\varphi(y))}{|x-y|^{N+ps}}\,dx\,dy.\]
Such definition agrees with the one given in Section \ref{sec1} for $p\ge 2$ and $u$ smooth enough. The operator $\fpl$ is strictly $T$-monotone \cite[Proposition 2.1]{IMP}. We are more interested in the restricted operator to the subspace $\w\subset\widetilde{W}^{s,p}(\Omega)$:

\begin{proposition}\label{pro}
{\rm\cite[Lemma 2.1]{FI} \cite[Lemma 2.1]{IL}} The operator $\fpl:\w\to W^{-s,p'}(\Omega)$ is continuous, the gradient of $u\mapsto\|u\|_{s,p}^p/p$, and of type $(S)_+$, i.e., whenever $u_n\rightharpoonup u$ in $\w$ and
\[\limsup_n\,\langle\fpl u_n,u_n-u\rangle \le 0,\]
then $u_n\to u$ in $\w$. Finally, for all $u\in\w$
\[\|u^\pm\|_{s,p}^p \le \langle\fpl u,\pm u^\pm\rangle.\]
\end{proposition}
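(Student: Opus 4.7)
The plan is to establish the four assertions in the order: gradient, continuity, $(S)_+$, splitting inequality.

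For the \emph{gradient property}, I would compute the Gateaux derivative of $\Phi(u):=\|u\|_{s,p}^p/p$ directly from its double-integral form, differentiating under the integral sign in the difference quotient $t^{-1}[\Phi(u+t\varphi)-\Phi(u)]$ as $t\to 0$. The pointwise $t$-derivative of the integrand is exactly the integrand of $\langle\fpl u,\varphi\rangle$, and the elementary bound $\bigl||a+tb|^p-|a|^p\bigr|\le C(|a|+|b|)^{p-1}|b||t|$ together with $\varphi\in\w$ provides an $L^1$-dominating function on $\R^N\times\R^N$ with respect to the measure $|x-y|^{-(N+ps)}\,dx\,dy$.

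For \emph{continuity}, the key observation is that the nonlinear map
\[G(u)(x,y):=\frac{|u(x)-u(y)|^{p-2}(u(x)-u(y))}{|x-y|^{(N+ps)/p'}}\]
satisfies $\|G(u)\|_{L^{p'}(\R^{2N})}=\|u\|_{s,p}^{p-1}$, and the pairing $\langle\fpl u,\varphi\rangle$ equals the $L^{p'}$--$L^p$ duality of $G(u)$ against $(\varphi(x)-\varphi(y))|x-y|^{-(N+ps)/p}$. Hence continuity of $\fpl$ reduces to continuity of $G:\w\to L^{p'}(\R^{2N})$: if $u_n\to u$ in $\w$ then, along a subsequence, $u_n\to u$ a.e., so $G(u_n)\to G(u)$ a.e., and since $\|G(u_n)\|_{p'}=\|u_n\|_{s,p}^{p-1}\to\|u\|_{s,p}^{p-1}=\|G(u)\|_{p'}$, the uniform convexity of $L^{p'}$ (or equivalently the Brezis--Lieb lemma) yields norm convergence; a standard subsequence argument upgrades this to full convergence.

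The \emph{$(S)_+$ property} is the delicate step. From $u_n\rightharpoonup u$ in $\w$ and $\fpl u\in W^{-s,p'}(\Omega)$ we have $\langle\fpl u,u_n-u\rangle\to 0$, so the hypothesis becomes $\limsup_n\langle\fpl u_n-\fpl u,u_n-u\rangle\le 0$. Since the integrand of this bracket is pointwise non-negative (monotonicity of $t\mapsto|t|^{p-2}t$ on $\R$), the limit is in fact $0$. At this point I would invoke Simon-type inequalities: for $p\ge 2$, $|a-b|^p\le C(|a|^{p-2}a-|b|^{p-2}b)(a-b)$, giving $\|u_n-u\|_{s,p}^p\to 0$ directly; for $1<p<2$, the weighted inequality $|a-b|^2\le C(|a|^{p-2}a-|b|^{p-2}b)(a-b)(|a|+|b|)^{2-p}$ forces one to control $\|u_n-u\|_{s,p}^p$ by H\"older against the factor $(|a|+|b|)^{p(2-p)/2}$, which is uniformly bounded thanks to the $\w$-boundedness of $\{u_n\}$. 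Balancing the H\"older exponents so that this argument closes is the main obstacle of the proof, although it is a standard route once the inequalities are in hand.

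Finally, the \emph{splitting inequality} is pointwise. For fixed $x,y\in\R^N$, a four-case analysis on the signs of $u(x)$ and $u(y)$ verifies
\[|u^+(x)-u^+(y)|^p\le |u(x)-u(y)|^{p-2}(u(x)-u(y))\bigl(u^+(x)-u^+(y)\bigr),\]
the only nontrivial case being $u(x)\ge 0>u(y)$, in which $|u(x)-u(y)|^{p-1}\ge u(x)^{p-1}$. Integrating against $|x-y|^{-(N+ps)}\,dx\,dy$ yields the $u^+$ estimate, and the $u^-$ estimate follows by replacing $u$ with $-u$.
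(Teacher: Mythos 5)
The paper does not prove Proposition \ref{pro}; it is quoted from \cite[Lemma 2.1]{FI} and \cite[Lemma 2.1]{IL}, and your plan reproduces exactly the standard argument used there: domination for the Gateaux derivative, the $L^{p'}$--duality formulation plus Radon--Riesz (or Brezis--Lieb) for continuity, the Simon-type inequalities split between $p\ge 2$ and $1<p<2$ (with the H\"older balancing against the bounded factor $(|a|+|b|)^{p(2-p)/2}$) for the $(S)_+$ property, and the pointwise sign analysis for the splitting inequality. All steps are sound, so the proposal is correct and essentially coincides with the proof in the cited references.
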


\noindent
We consider now problem \eqref{dir} under the following, more general hypothesis (equivalent to ${\bf H}$ \ref{h1}):
\begin{itemize}[leftmargin=.7cm]
\item[${\bf H}_0.$] $f:\Omega\times\R\to\R$ is a Carath\'eodory mapping, and there exist $c_0>0$, $r\in(p,p^*_s)$ s.t.\ for a.e.\ $x\in\Omega$ and all $t\in\R$
\[|f(x,t)| \le c_0(1+|t|^{r-1}).\]
\end{itemize}
We introduce the notions of weak (super, sub-) solutions:

\begin{definition}\label{sol}
We say that $u\in\widetilde{W}^{s,p}(\Omega)$ is a (weak) supersolution of \eqref{dir} if $u\ge 0$ in $\Omega^c$ and for all $\varphi\in\w_+$
\[\langle\fpl u,\varphi\rangle \ge \int_\Omega f(x,u)\varphi\,dx.\]
Similarly, $u\in\widetilde{W}^{s,p}(\Omega)$ is a subsolution of \eqref{dir} if $u\le 0$ in $\Omega^c$ and for all $\varphi\in\w_+$
\[\langle\fpl u,\varphi\rangle \le \int_\Omega f(x,u)\varphi\,dx.\]
Finally, we say that $u$ is a solution of \eqref{dir} if $u$ is both a supersolution and a subsolution.
\end{definition}

\noindent
Equivalently, $u\in\w$ solves \eqref{dir} if for all $\varphi\in\w$
\beq\label{weak}
\langle\fpl u,\varphi\rangle = \int_\Omega f(x,u)\varphi\,dx.
\eeq
The notions of super- and subsolutions given in Definition \ref{sol} above (borrowed from \cite{P}) are not the most general ones. Indeed, the space $\widetilde{W}^{s,p}(\Omega)$ may be replaced with suitable subspaces of $L^p_{\rm loc}(\R^N)$ (see for instance \cite{BGCV}).
\vskip2pt
\noindent
We next recall some qualitative properties of solutions, starting with a priori bounds:

\begin{proposition}\label{apb}
{\rm\cite[Proposition 2.3]{IM1}} Let ${\bf H}_0$ hold, $u\in\w$ be a solution of \eqref{dir}. Then, $u\in L^{\infty}(\Omega)$ with $\|u\|_{\infty} \le C$, for some $C=C(\|u\|_{s,p})>0$.
\end{proposition}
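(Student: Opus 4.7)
The plan is to establish the estimate via a Moser-type iteration, the standard tool for $L^\infty$-bounds on weak solutions of fractional $p$-Laplacian equations with subcritical growth. By treating $u^+$ and $u^-$ separately (the two cases being symmetric), it suffices to bound $u^+\in\w$ in $L^\infty(\Omega)$.

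For $M>0$ and a parameter $\beta\ge 1$ to be iterated, I use the admissible test function $\varphi=u\,u_M^{p(\beta-1)}\in\w$, where $u_M=\min(u^+,M)$, in the weak formulation \eqref{weak}. The Stroock-Varopoulos-type inequality for the fractional $p$-Laplacian gives
\[ \langle\fpl u,\varphi\rangle \,\ge\, c\,\beta^{1-p}\,[u_M^\beta]_{s,p,\R^N}^p, \]
and combined with the Sobolev embedding $\w\hookrightarrow L^{p^*_s}(\Omega)$ this yields $\|u_M^\beta\|_{p^*_s}^p\le C\beta^{p-1}\int_\Omega|f(x,u)|\,|\varphi|\,dx$. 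By ${\bf H}_0$ and the pointwise bound $u_M\le|u|$, the right-hand side is controlled by $C\beta^{p-1}\bigl(1+\int_\Omega|u|^{p\beta+r-p}\,dx\bigr)$, which by H\"older's inequality with conjugate exponents $p^*_s/(r-p)$ and $\gamma:=p^*_s/(p^*_s-r+p)$ is further estimated by $C\beta^{p-1}\bigl(1+\|u\|_{p^*_s}^{r-p}\|u\|_{\gamma p\beta}^{p\beta}\bigr)$.

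The crucial observation is that $\gamma p<p^*_s$ precisely because $r<p^*_s$ in ${\bf H}_0$, so the recurrence on norms is genuinely increasing. Sending $M\to\infty$ by monotone convergence and extracting the $p\beta$-th root yields, up to a harmless absolute constant,
\[ \|u\|_{\beta p^*_s} \,\le\, C_\beta^{1/(p\beta)}\bigl(1+\|u\|_{\gamma p\beta}\bigr), \qquad C_\beta:=C\beta^{p-1}\bigl(1+\|u\|_{p^*_s}^{r-p}\bigr). \]
Iterating along $\beta_0=1$, $\beta_{n+1}=\beta_n\,p^*_s/(\gamma p)$, which grows geometrically since $p^*_s/(\gamma p)>1$, the product of the factors $C_{\beta_n}^{1/(p\beta_n)}$ converges (the log of it is controlled by the summable series $\sum_n\beta_n^{-1}\log\beta_n$). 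Seeding with $\|u\|_{p^*_s}\le C\|u\|_{s,p}$, one concludes $\|u\|_\infty=\lim_n\|u\|_{\beta_n p^*_s}\le C(\|u\|_{s,p})$, completing the proof.

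The main technical obstacle is the nonlocal chain-rule inequality displayed above: its proof for the fractional $p$-Laplacian is algebraically delicate and splits into the degenerate ($p\ge 2$) and singular ($1<p<2$) regimes, but in both ranges the sharp $\beta^{1-p}$ scaling of the constant is exactly what makes the Moser iteration converge. A secondary issue is admissibility of $\varphi$: truncating at level $M$ rather than using $u^{p\beta-p+1}$ directly is essential to ensure $\varphi\in\w$, at the cost of a final monotone-convergence passage $M\to\infty$ to close the argument.
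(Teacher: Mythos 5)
Your proposal is essentially correct, but note that the paper does not prove this statement at all: Proposition \ref{apb} is imported verbatim from \cite[Proposition 2.3]{IM1}, so there is no in-paper argument to compare against. Your Moser iteration is one of the two standard routes to such a priori bounds in this literature (the other being a De Giorgi--Stampacchia level-set truncation on $(u-k)^+$ with a fast-convergence lemma), and all the key mechanisms are in place: the test function $g(u)$ with $g(t)=t\min(t^+,M)^{p(\beta-1)}$ is Lipschitz with $g(0)=0$, hence admissible in $\w$; the discrete chain-rule (Stroock--Varopoulos) inequality with the $\beta^{1-p}$ constant is valid for every $p>1$ (it follows from $|a-b|^{p-2}(a-b)(g(a)-g(b))\ge|G(a)-G(b)|^p$ with $G'=(g')^{1/p}$, so no case split between the degenerate and singular regimes is actually needed); and the strict subcriticality $r<p^*_s$ is exactly what makes $\gamma p<p^*_s$ and the exponents $\beta_n$ grow geometrically. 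Two small points deserve care in a write-up. First, the chain-rule inequality must be applied to the full double integral over $\R^N\times\R^N$, so you should check the cross terms where $u(x)>0\ge u(y)$ (there the integrand is nonnegative and dominates $u_M(x)^{p\beta}$ directly) and the exterior contribution where $u\equiv 0$. Second, your recursion bounds $\|u^+\|_{\beta p^*_s}$ by a quantity involving $\|u\|_{\gamma p\beta}$ of the \emph{full} function, so the iteration does not close on $u^+$ alone; it closes on $a_n=\|u^+\|_{\beta_n p^*_s}+\|u^-\|_{\beta_n p^*_s}$ after adding the symmetric estimate for $u^-$. Neither issue is a gap, only bookkeeping.
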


\noindent
While solutions of nonlocal problems are in general very regular in the interior, they fail to be smooth up to the boundary. An effective substitute for higher boundary regularity is a form of weighted H\"older continuity with weight
\[\ds(x) = {\rm dist}(x,\Omega^c)^s.\]
For all $\alpha\in[0,1)$ we define the space
\[C^\alpha_s(\overline\Omega) = \Big\{u\in C^0(\overline\Omega):\,\frac{u}{\ds} \ \text{has a $\alpha$-H\"older continuous extension to $\overline\Omega$}\Big\},\]
endowed with the norm $\|u\|_{\alpha,s}=\|u/\ds\|_{C^\alpha(\overline\Omega)}$ (for $\alpha=0$, the extension is only assumed to be continuous). In particular we will use the space $\cs$, whose positive order cone has a nonempty interior given by
\beq\label{int}
{\rm int}(\cs_+) = \Big\{u\in\cs:\,\inf_{x\in\Omega}\frac{u(x)}{\ds(x)}>0\Big\}.
\eeq
Combining Proposition \ref{apb} with \cite[Theorem 1.1]{IM}, we have the following global regularity result:

\begin{proposition}\label{reg}
Let ${\bf H}_0$ hold, $u \in \w$ be a solution of \eqref{dir}. Then, $u \in C_s^{\alpha}(\overline\Omega)$  for some $\alpha \in (0,s]$ depending only on $N,p,s,$ and $\Omega$, with $\|u\|_{\alpha,s}\le C(\|u\|_{s,p})$.
\end{proposition}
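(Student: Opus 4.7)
The plan is to reduce the statement to the boundary regularity theorem of \cite{IM} by first controlling the reaction term in $L^\infty$, and then simply invoking that result.

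First I would apply Proposition \ref{apb} to the solution $u$: since ${\bf H}_0$ holds, one obtains $\|u\|_\infty \le C_1(\|u\|_{s,p})$. Using ${\bf H}_0$ again, the reaction evaluated at $u$ satisfies, for a.e.\ $x\in\Omega$,
\[
|f(x,u(x))| \le c_0\bigl(1+|u(x)|^{r-1}\bigr) \le c_0\bigl(1+\|u\|_\infty^{r-1}\bigr) \le C_2(\|u\|_{s,p}).
\]
Setting $g(x)=f(x,u(x))$, the function $u$ is therefore a weak solution in $\w$ of $\fpl u = g$ in $\Omega$ with $u=0$ in $\Omega^c$, where $g\in L^\infty(\Omega)$ with a norm controlled by $\|u\|_{s,p}$.

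Second, I would invoke \cite[Theorem 1.1]{IM}, which provides exactly the boundary $C^\alpha_s$ regularity for solutions of the Dirichlet problem for $\fpl$ with a bounded right-hand side: there exist $\alpha\in(0,s]$, depending only on $N$, $p$, $s$ and $\Omega$, and a constant $C_3>0$ depending on the same data and on $\|g\|_\infty$, such that $u\in C^\alpha_s(\overline\Omega)$ and
\[
\|u\|_{\alpha,s} \le C_3\bigl(\|g\|_\infty\bigr).
\]
Combining this with the bound on $\|g\|_\infty$ obtained in the previous step yields $\|u\|_{\alpha,s}\le C(\|u\|_{s,p})$ with $C$ depending only on $N,p,s,\Omega,c_0,r$, which is the desired conclusion.

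There is no real obstacle here: the argument is essentially a two-line bootstrap, combining the $L^\infty$ a priori bound (Proposition \ref{apb}) with the external boundary regularity theorem cited from \cite{IM}. The only point requiring care is to verify that the $L^\infty$-bound of $g$ provided by ${\bf H}_0$ feeds exactly into the hypotheses of \cite[Theorem 1.1]{IM}, but this follows immediately from the form of the estimate in \ref{h1} once $\|u\|_\infty$ is controlled. Thus the statement of Proposition \ref{reg} follows at once.
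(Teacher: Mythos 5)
Your proposal is correct and follows exactly the paper's (implicit) argument: the paper states Proposition \ref{reg} as the combination of the a priori bound of Proposition \ref{apb} with the weighted H\"older regularity theorem \cite[Theorem 1.1]{IM}, which is precisely the two-step reduction you carry out. Nothing further is needed.
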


\noindent
The following strong maximum principle holds for continuous supersolutions:

\begin{proposition}\label{smp}
{\rm\cite[Theorem 2.6]{IMP}} Let $g \in C^0(\R) \cap BV_{\rm loc}(\R)$, $u\in\widetilde{W}^{s,p}(\Omega)\cap C^0(\overline\Omega)\setminus\{0\}$ s.t.\ 
\[\begin{cases}
\fpl u+g(u) \ge g(0) & \text{weakly in $\Omega$} \\
u \ge 0 & \text{in $\R^N$.}
\end{cases}\]
Then, 
\[\inf_{\Omega} \frac{u}{\ds} > 0.\]
In particular, if $u\in\cs$, then $u\in{\rm int}(\cs_+)$.
\end{proposition}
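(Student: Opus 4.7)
The plan is to decompose the statement into two parts: (a) interior strict positivity, $u>0$ in $\Omega$; and (b) a quantitative Hopf-type estimate $u(x)\ge c\,\ds(x)$ on a neighbourhood of $\partial\Omega$. Together with the characterization \eqref{int} of $\mathrm{int}(\cs_+)$, these yield the conclusion. Throughout, the structural fact I would exploit is that $g\in BV_{\rm loc}(\R)$ admits, on any compact interval covering the range of $u$, a decomposition $g=g_1-g_2$ with $g_1,g_2$ continuous and non-decreasing; together with the strict $T$-monotonicity of $\fpl$ (cited before Proposition~\ref{pro}), this is what makes comparison arguments go through even in the presence of the non-monotone perturbation $g$.

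For step (a) I would argue by contradiction. Suppose $u\not\equiv 0$ in $\R^N$ while $u(x_0)=0$ for some $x_0\in\Omega$. Since $u\in C^0(\overline\Omega)$ with $u\ge 0$ and $u\not\equiv 0$, and $u\in\widetilde{W}^{s,p}(\Omega)$, the nonlocal operator evaluated at the minimum point $x_0$ satisfies, in a pointwise/viscosity sense,
\[
\fpl u(x_0)=-2\int_{\R^N}\frac{u(y)^{p-1}}{|x_0-y|^{N+ps}}\,dy<0,
\]
because the integrand is non-negative and strictly positive on a set of positive measure. On the other hand, testing the weak inequality $\fpl u+g(u)\ge g(0)$ against non-negative approximate identities concentrated at $x_0$ and using continuity of $u$ and $g$ gives $\fpl u(x_0)\ge g(0)-g(u(x_0))=0$, a contradiction. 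This step requires only the continuity of $g$; the $BV_{\rm loc}$ assumption is not yet needed.

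For step (b) I would build a quantitative barrier near $\partial\Omega$. Using the interior ball condition provided by the $C^{1,1}$-smoothness of $\partial\Omega$, at each boundary point attach an interior ball $B\subset\Omega$. In $B$ consider the torsion-type function $w_0\in W^{s,p}_0(B)$ solving $\fpl w_0=1$ in $B$, $w_0=0$ in $B^c$; by the known boundary behaviour for fractional $p$-Laplace equations on balls, $w_0(x)\asymp \mathrm{dist}(x,B^c)^s$, and in particular $w_0\ge c\,\ds$ on $B$. For $\eta>0$ small, $w:=\eta w_0$ satisfies $\fpl w=\eta^{p-1}$, while by continuity of $g$ we can arrange $|g(w)-g(0)|\le \eta^{p-1}/2$ on $B$; hence $\fpl w+g(w)\le g(0)$, i.e., $w$ is a subsolution of the inequality satisfied by $u$ in $B$. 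Since by step (a) $u$ is continuous and strictly positive on the compact set $\partial B\cap\Omega$, we can shrink $\eta$ further so that $u\ge w$ on $B^c$ (the outside data of the comparison). A weak comparison principle in $B$ then yields $u\ge w\ge c\,\ds$ in $B$, and covering a tubular neighbourhood of $\partial\Omega$ by finitely many such balls, combined with the already established interior positivity away from $\partial\Omega$, produces the uniform lower bound $\inf_\Omega u/\ds>0$.

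The main obstacle is the weak comparison step inside $B$, because the perturbation $g$ is only continuous (not monotone) and $\fpl$ is degenerate/singular for $p\ne 2$. This is precisely where the $BV_{\rm loc}$ hypothesis is used: decomposing $g=g_1-g_2$ on $[0,\|u\|_\infty]$ with $g_1,g_2$ continuous non-decreasing, one tests the difference of the two inequalities against $(w-u)^+\in \w_+$; the $T$-monotonicity of $\fpl$ provides a non-negative contribution on $\{w>u\}$, the term involving $g_1$ has the right sign because $g_1$ is non-decreasing, and the term involving $g_2$ is absorbed by choosing $\eta$ so small that the total variation of $g_2$ on $[0,\eta\|w_0\|_\infty]$ is arbitrarily small. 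This forces $(w-u)^+\equiv 0$, i.e., $u\ge w$ in $B$, closing the argument.
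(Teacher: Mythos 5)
The paper does not prove this proposition; it quotes it verbatim from \cite[Theorem 2.6]{IMP}, so your attempt has to be judged on its own merits. Your overall architecture (interior positivity plus a barrier on interior tangent balls) is the standard one, but the barrier step contains a sign error and omits the one idea that makes the fractional Hopf lemma work. You take $w=\eta w_0$ with $\fpl w_0=1$, so $\fpl w=\eta^{p-1}>0$, and you claim that $|g(w)-g(0)|\le\eta^{p-1}/2$ gives $\fpl w+g(w)\le g(0)$. It gives the opposite: $\fpl w+g(w)\ge g(0)+\eta^{p-1}-\eta^{p-1}/2>g(0)$, so $w$ is a strict \emph{super}solution, not a subsolution, and the comparison $u\ge w$ cannot be launched. (Moreover, the bound $|g(\eta w_0)-g(0)|\le\eta^{p-1}/2$ does not follow from continuity of $g$: the modulus of continuity of $g$ at $0$ need not be $O(t^{p-1})$.) From the differential inequality alone, all you know about $u$ in a small boundary ball $B$ is $\fpl u\ge g(0)-g(u)\ge -o(1)$ as $B$ shrinks, and this right-hand side may be negative; hence the hypothesis $\fpl u\ge \fpl w$ needed for weak comparison is simply not available.

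The missing ingredient is the \emph{nonlocal tail}. Once interior positivity is known, so that $u\ge m>0$ on some compact $K\Subset\Omega$, the contribution of $u|_K$ to $\langle\fpl u,\varphi\rangle$ for $\varphi$ supported in a small boundary ball $B$ produces a strictly positive forcing term: $u$ is a weak supersolution of $\fpl v\ge\sigma$ in $B$ for some $\sigma=\sigma(m,K,B)>0$, and this $\sigma$ dominates the $o(1)$ error $g(u)-g(0)$ if $B$ is small enough. Only then does $\eta w_0$ with $\eta^{p-1}\le\sigma/2$ become an admissible comparison function, and the remaining comparison is the plain weak comparison principle for $\fpl$ with no $g$ left in it (so the $g_1-g_2$ absorption you sketch at the end, which in any case does not close as written because strict $T$-monotonicity gives no coercive term to beat the $\eps\int(w-u)^+$ error, is not needed there). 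This tail argument is exactly how the cited proof in \cite{IMP} proceeds. Two further, more minor, points: in step (a) the passage from the weak inequality to the pointwise inequality $\fpl u(x_0)\ge 0$ at an interior zero is not justified for weak supersolutions (one needs the weak Harnack inequality or the equivalence with viscosity supersolutions, where the $BV$ decomposition of $g$ actually enters); and $w_0\asymp{\rm dist}(\cdot,B^c)^s$ yields $w_0\le C\ds$ on $B$, not $w_0\ge c\,\ds$, so the final covering argument can only compare $u$ with $\ds$ along the normal segment at each tangency point.
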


\noindent
As said in Section \ref{sec1}, our approach to problem \eqref{dir} is variational. We define an energy functional by setting for all $u\in\w$
\[\Phi(u) = \frac{\|u\|_{s,p}^p}{p}-\int_\Omega F(x,u)\,dx.\]
As proved in \cite{ILPS}, as soon as ${\bf H}_0$ is satisfied, $\Phi$ is sequentially weakly l.s.c.\ in $\w$ and $\Phi\in C^1(\w)$ with G\^ateaux derivative given for all $u,\varphi\in\w$ by
\[\langle\Phi'(u),\varphi\rangle = \langle\fpl u,\varphi\rangle-\int_\Omega f(x,u)\varphi\,dx.\]
In particular, $u$ satisfies \eqref{weak} iff $\Phi'(u)=0$ in $W^{-s,p'}(\Omega)$. Also, $\Phi$ enjoys the following bounded compactness condition, following from the $(S)_+$-property of $\fpl$ (Proposition \ref{pro}):

\begin{proposition}\label{bps}
{\rm\cite[Proposition 2.1]{ILPS}} Let ${\bf H}_0$ hold, and $(u_n)$ be a bounded sequence in $\w$ s.t.\ $\Phi'(u_n)\to 0$. Then, $(u_n)$ has a (strongly) convergent subsequence in $\w$.
\end{proposition}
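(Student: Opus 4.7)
The plan is to exploit the $(S)_+$-property of $\fpl$ from Proposition \ref{pro} together with compactness of the embedding $\w \hookrightarrow L^r(\Omega)$, which holds because the exponent $r \in (p, p^*_s)$ from hypothesis ${\bf H}_0$ is strictly subcritical. So the strategy is the standard one for this sort of bounded Cerami/Palais-Smale condition: reduce strong convergence in $\w$ to the $(S)_+$ inequality, controlling the reaction term by compactness in a subcritical Lebesgue space.

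First, I would use that $\w$ is uniformly convex (hence reflexive): from boundedness of $(u_n)$, after passing to a subsequence, $u_n \rightharpoonup u$ weakly in $\w$ for some $u \in \w$. By the compact embedding $\w \hookrightarrow L^r(\Omega)$, the same subsequence satisfies $u_n \to u$ strongly in $L^r(\Omega)$, and up to a further subsequence, $u_n(x) \to u(x)$ a.e.\ in $\Omega$ together with a dominating function in $L^r(\Omega)$.

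Next, I would estimate the reaction contribution. Using ${\bf H}_0$ and H\"older's inequality with exponents $r/(r-1)$ and $r$,
\[
\Bigl|\int_\Omega f(x,u_n)(u_n - u)\,dx\Bigr| \le c_0 \int_\Omega \bigl(1 + |u_n|^{r-1}\bigr)|u_n - u|\,dx \le C\bigl(1 + \|u_n\|_r^{r-1}\bigr)\|u_n - u\|_r,
\]
which tends to $0$ since $\|u_n\|_r$ is bounded and $\|u_n - u\|_r \to 0$. On the other hand, since $\Phi'(u_n) \to 0$ in $W^{-s,p'}(\Omega)$ and $(u_n - u)$ is bounded in $\w$, we have $\langle \Phi'(u_n), u_n - u\rangle \to 0$. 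Combining these two facts with the expression of $\Phi'$ yields
\[
\langle \fpl u_n, u_n - u\rangle = \langle \Phi'(u_n), u_n - u\rangle + \int_\Omega f(x, u_n)(u_n - u)\,dx \longrightarrow 0.
\]

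Finally, I would invoke the $(S)_+$-property stated in Proposition \ref{pro}: since $u_n \rightharpoonup u$ in $\w$ and $\limsup_n \langle \fpl u_n, u_n - u\rangle \le 0$, it follows that $u_n \to u$ strongly in $\w$, which is the desired conclusion. There is no real obstacle here: every step is a direct consequence of the ingredients already collected (reflexivity, subcritical compactness, growth of $f$, and the $(S)_+$-property); the only thing to watch is that the choice $r < p^*_s$ in ${\bf H}_0$ is precisely what makes the embedding compact and allows the reaction term to vanish in the limit.
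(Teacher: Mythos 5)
Your proof is correct and follows exactly the route the paper indicates: the result is quoted from \cite{ILPS} and attributed to the $(S)_+$-property of $\fpl$, and your argument (weak convergence by reflexivity, compactness of $\w\hookrightarrow L^r(\Omega)$ to kill the reaction term via the growth bound in ${\bf H}_0$, then the $(S)_+$-property from Proposition \ref{pro}) is the standard proof of that cited statement. The a.e.\ convergence and dominating function you extract are not actually needed, since the H\"older estimate alone handles the reaction term, but this is harmless.
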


\noindent
Another crucial tool on our study is the following equivalence principle for Sobolev and (weighted) H\"older local minima of $\Phi$:

\begin{proposition}\label{svh}
{\rm\cite[Theorem 3.1]{IM1}} Let ${\bf H}_0$ hold, $u\in\w$. Then, the following conditions are equivalent:
\begin{enumroman}
\item\label{svh1} there exists $\rho>0$ s.t.\ $\Phi(u+v)\ge\Phi(u)$ for all $v\in\w$, $\|v\|_{s,p}\le\rho$;
\item\label{svh2} there exists $\rho'>0$ s.t.\ $\Phi(u+v)\ge\Phi(u)$ for all $v\in\w\cap\cs$, $\|v\|_{0,s}\le\rho'$.
\end{enumroman}
\end{proposition}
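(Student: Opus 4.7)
The plan is to prove the two implications separately; (i)$\Rightarrow$(ii) is straightforward, while (ii)$\Rightarrow$(i) carries the actual content. For (i)$\Rightarrow$(ii), I would exploit the embedding $\w\cap\cs\hookrightarrow\w$ quantitatively. Any $v\in\w\cap\cs$ satisfies the pointwise bound $|v(x)|\le\|v\|_{0,s}\,\ds(x)$ on all of $\R^N$ (using $v\equiv 0$ in $\Omega^c$); splitting the Gagliardo double integral according to whether $|x-y|$ is comparable to the boundary distances and using the $s$-H\"older continuity of $\ds$, a routine estimate yields $\|v\|_{s,p}\le C\|v\|_{0,s}$ for a constant $C=C(\Omega,s,p)$. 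Choosing $\rho'=\rho/C$ concludes this direction.

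For (ii)$\Rightarrow$(i), I argue by contradiction. I first note that (ii) forces $\Phi'(u)=0$: every $\varphi\in C^\infty_c(\Omega)$ lies in $\cs$ (since $\ds$ is bounded below on its compact support), so $t\mapsto\Phi(u+t\varphi)$ has a local minimum at $t=0$, giving $\langle\Phi'(u),\varphi\rangle=0$, and the density of $C^\infty_c(\Omega)$ in $\w$ concludes. Hence $u$ solves \eqref{dir}, and Proposition \ref{reg} yields $u\in C^\alpha_s(\overline\Omega)$.

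Suppose now (i) fails. Then for a sequence $\rho_n\downarrow 0$,
\[\inf\bigl\{\Phi(u+v):v\in\w,\ \|v\|_{s,p}\le\rho_n\bigr\}<\Phi(u);\]
by weak lower semicontinuity of $\Phi$ on the weakly compact ball $\overline{B}_{\rho_n}(0)\subset\w$, this infimum is attained at some $v_n$ with $v_n\to 0$ in $\w$ and $\Phi(u+v_n)<\Phi(u)$. Since $v\mapsto\|v\|_{s,p}^p/p$ is $C^1$ with gradient $\fpl v$ (Proposition \ref{pro}), a Lagrange multiplier argument produces $\mu_n\le 0$ (with $\mu_n=0$ whenever $\|v_n\|_{s,p}<\rho_n$) such that
\[\fpl(u+v_n)-\mu_n\fpl v_n=f(\cdot,u+v_n)\quad\text{in }W^{-s,p'}(\Omega).\]

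The core of the argument is then to establish uniform bounds on $w_n:=u+v_n$ in $C^\alpha_s(\overline\Omega)$. The sign $-\mu_n\ge 0$ preserves the monotone/elliptic structure of the equation, so the Moser-type iteration underlying Proposition \ref{apb} still delivers $\|w_n\|_\infty\le C$ depending only on $\|u\|_{s,p}+1$, and a corresponding adaptation of the weighted boundary regularity of Proposition \ref{reg} yields $\|w_n\|_{\alpha,s}\le C$ uniformly in $n$. Since $u\in C^\alpha_s(\overline\Omega)$, the sequence $v_n=w_n-u$ is bounded in $C^\alpha_s(\overline\Omega)$, which embeds compactly in $\cs$ via an Ascoli argument applied to $v_n/\ds$. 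Combined with $v_n\to 0$ in $\w$, we deduce $v_n\to 0$ in $\cs$ along a subsequence, so eventually $\|v_n\|_{0,s}\le\rho'$; then (ii) forces $\Phi(u+v_n)\ge\Phi(u)$, contradicting the strict inequality above. The main obstacle is precisely this regularity step: one must verify both the $L^\infty$ bound and the weighted H\"older bound for the constrained equation, uniformly in $n$, with the sign $\mu_n\le 0$ being critical to preserve the ellipticity on which these estimates rely.
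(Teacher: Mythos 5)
First, a framing remark: the paper does not prove Proposition \ref{svh} at all --- it is quoted from \cite[Theorem 3.1]{IM1} (see also \cite{IMS}) --- so your proposal can only be measured against the strategy of those references. For the substantive implication \ref{svh2}$\Rightarrow$\ref{svh1} your outline is indeed the one used there (the fractional analogue of the Brezis--Nirenberg argument): minimize $\Phi(u+\cdot)$ on shrinking balls, derive the constrained Euler equation $\fpl (u+v_n)+\lambda_n\fpl v_n=f(x,u+v_n)$ with $\lambda_n\ge 0$, obtain uniform $L^\infty$ and weighted H\"older bounds, and use the compact embedding $C^\alpha_s(\overline\Omega)\hookrightarrow\cs$ to upgrade $v_n\to 0$ from $\w$ to $\cs$. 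You correctly flag that the whole difficulty is the uniform regularity step, but you do not supply it: Propositions \ref{apb} and \ref{reg} as stated apply to $\fpl$ alone, and extending the a priori bound and, above all, the fine boundary regularity to the two-operator equation uniformly in $\lambda_n\ge 0$ is precisely the technical content of \cite{IMS,IM1}. So this direction is a correct skeleton with the hard step left as an assertion.

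The genuine error is in the direction you call straightforward, \ref{svh1}$\Rightarrow$\ref{svh2}. The inequality $\|v\|_{s,p}\le C\|v\|_{0,s}$ is false: $\|v\|_{0,s}=\|v/\ds\|_\infty$ controls only the size of $v$, not its oscillation, and a small-amplitude high-frequency perturbation (e.g.\ $v=\eps\,\varphi\sin(kx_1)$ with $\varphi\in C^\infty_c(\Omega)$ fixed and $k\to\infty$) keeps $\|v\|_{0,s}\le C\eps$ while $\|v\|_{s,p}\sim\eps k^s\to\infty$. There is no continuous embedding of $\cs$ into $\w$ --- this is exactly why condition \ref{svh2} must intersect with $\w$ --- so a small $C^0_s$-ball is not contained in any $W^{s,p}$-ball and the implication is not a one-line inclusion. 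It remains the easier direction, but it needs an argument, for instance: if it failed, there would be $v_n\in\w\cap\cs$ with $\|v_n\|_{0,s}\to 0$ and $\Phi(u+v_n)<\Phi(u)$; since \ref{svh1} forces $\Phi'(u)=0$ and hence $u\in L^\infty(\Omega)$ by Proposition \ref{apb}, the potential terms $\int_\Omega F(x,u+v_n)\,dx$ stay bounded and converge to $\int_\Omega F(x,u)\,dx$, so the strict energy inequality bounds $\|u+v_n\|_{s,p}$ and in fact gives $\limsup_n\|u+v_n\|_{s,p}\le\|u\|_{s,p}$; combined with $v_n\rightharpoonup 0$ and the uniform convexity of $\w$ this yields $v_n\to 0$ strongly in $\w$, contradicting \ref{svh1}. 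You should replace your embedding claim with an argument of this kind.
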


\noindent
Finally we consider the eigenvalue problem for the fractional $p$-Laplacian with Dirichlet conditions:
\beq\label{evp}
\begin{cases}
\fpl u = \lambda|u|^{p-1}u & \text{in $\Omega$} \\
u = 0 & \text{in $\Omega^c$.}
\end{cases}
\eeq
For our needs, we only recall some properties of the principal eigenvalue:

\begin{proposition}\label{pev}
{\rm\cite[Theorems 4.1, 4.2]{FP}} The smallest eigenvalue of \eqref{evp} is
\[\lambda_1 = \min_{u\in\w\setminus\{0\}}\frac{\|u\|_{s,p}^p}{\|u\|_p^p} > 0,\]
it is simple, isolated, and attained at a unique positive eigenfunction $e_1\in{\rm int}(\cs_+)$ s.t.\ $\|e_1\|_p=1$.
\end{proposition}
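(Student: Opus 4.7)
The plan is to establish the listed properties in turn, exploiting the variational structure of \eqref{evp} together with the regularity and maximum-principle machinery recorded above. First, the infimum $\lambda_1=\inf_{u\in\w\setminus\{0\}}\|u\|_{s,p}^p/\|u\|_p^p$ is strictly positive by the continuous embedding $\w\hookrightarrow L^p(\Omega)$. For attainment, I would pick a minimizing sequence $(u_n)\subset\w$ with $\|u_n\|_p=1$: boundedness in $\w$ gives $u_n\rightharpoonup e_1$ along a subsequence, the compact embedding into $L^p(\Omega)$ forces $\|e_1\|_p=1$, and weak lower semicontinuity of $\|\cdot\|_{s,p}$ yields $\|e_1\|_{s,p}^p\le\lambda_1$, hence equality. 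The pointwise inequality $\bigl||a|-|b|\bigr|\le|a-b|$ shows that $|e_1|$ is also a minimizer, so we may assume $e_1\ge 0$.

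Next, a standard Lagrange multiplier argument on the $L^p$-sphere yields $\fpl e_1=\lambda_1 e_1^{p-1}$ in the weak sense \eqref{weak}. Propositions \ref{apb} and \ref{reg} then give $e_1\in\cs$, and Proposition \ref{smp} (with $g\equiv 0$) upgrades this to $e_1\in{\rm int}(\cs_+)$ via \eqref{int}. Moreover, any eigenvalue $\lambda$ of \eqref{evp} with eigenfunction $u$ satisfies $\lambda=\|u\|_{s,p}^p/\|u\|_p^p\ge\lambda_1$, so $\lambda_1$ is indeed the smallest.

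The delicate step is simplicity, which I would handle via the discrete Picone inequality for the fractional $p$-Laplacian: for $u\in\w$ non-negative and $v\in{\rm int}(\cs_+)$,
\[\Bigl\langle\fpl v,\tfrac{u^p}{v^{p-1}}\Bigr\rangle\le\|u\|_{s,p}^p,\]
with equality exactly when $u/v$ is a.e.\ constant in $\Omega$. If $w\in\w$ is any $\lambda_1$-eigenfunction, plugging $u=|w|$ and $v=e_1$ gives
\[\lambda_1\|w\|_p^p\le\bigl\||w|\bigr\|_{s,p}^p\le\|w\|_{s,p}^p=\lambda_1\|w\|_p^p,\]
forcing equality throughout. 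The first equality yields $|w|=c\,e_1$ for some $c\ge 0$, while the second, combined with $|w(x)-w(y)|\ge\bigl||w(x)|-|w(y)|\bigr|$ (equality iff $w(x)w(y)\ge 0$), forces $w$ to have constant sign, whence $w=\pm c\,e_1$.

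Finally, isolation follows by contradiction: assuming eigenvalues $\lambda_n\downarrow\lambda_1$ with $\lambda_n>\lambda_1$ and normalized eigenfunctions $u_n$ ($\|u_n\|_p=1$), the bound $\|u_n\|_{s,p}^p=\lambda_n$ combined with compact embedding in $L^p(\Omega)$ and the $(S)_+$-property (Proposition \ref{pro}) gives strong $\w$-convergence of a subsequence to a $\lambda_1$-eigenfunction, which by simplicity equals $\pm e_1$; the uniform $C^\alpha_s$-estimate from Proposition \ref{reg} then forces $u_n/\ds$ to converge uniformly to $\pm e_1/\ds>0$, so $u_n$ has constant sign for $n$ large, contradicting the Picone-based fact that no non-trivial constant-sign eigenfunction exists above $\lambda_1$. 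The main obstacle throughout is the simplicity argument, where the nonlocal Picone identity and the sharp analysis of its equality case require significantly more care than in the local $p$-Laplacian setting.
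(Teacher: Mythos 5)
The paper offers no proof of this statement: Proposition \ref{pev} is quoted directly from \cite[Theorems 4.1, 4.2]{FP}, so there is no internal argument to compare yours against. Your sketch follows the standard route of that literature (direct minimization on the $L^p$-sphere for existence, the strong maximum principle of Proposition \ref{smp} for membership in ${\rm int}(\cs_+)$, the discrete Picone inequality for simplicity, and a compactness--sign argument for isolation), and it is correct in outline; the existence, positivity, and ``smallest eigenvalue'' steps are complete as written. Three points in the second half are asserted rather than established and carry essentially all of the difficulty. First, the pairing $\langle\fpl v,u^p/v^{p-1}\rangle$ is not a priori defined, since $u^p/v^{p-1}$ need not belong to $\w$; the standard remedy is to test with $u^p/(v+\eps)^{p-1}$ and let $\eps\to0^+$, and this is precisely where $v\in{\rm int}(\cs_+)$ (so that $v/\ds$ is bounded below on $\Omega$) enters. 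Second, the characterization of equality in the nonlocal Picone inequality --- that equality forces $u/v$ to be constant --- is a theorem in its own right (Brasco--Franzina), not a one-line computation; you correctly flag it as the delicate step, but as written it is an assumption, not a proof. Third, your isolation argument hinges on ``the Picone-based fact that no nontrivial constant-sign eigenfunction exists above $\lambda_1$''; this is true and follows from the same inequality applied with $u=e_1$ and $v$ the putative positive eigenfunction (after upgrading the latter to ${\rm int}(\cs_+)$ via Proposition \ref{smp}), but it must be stated and proved as a separate lemma, since the contradiction at the end of your compactness argument rests entirely on it. With those three items supplied, your proof is a faithful reconstruction of the cited result.
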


\subsection{Some recalls of Morse theory}\label{ss22}

Here we collect some definitions from Morse theory for $C^1$-functionals defined in Banach spaces, mainly following \cite[Chapter 6]{MMP}. Such theory extends the classical Morse theory for $C^2$-functionals in Hilbert spaces (see \cite{C}), but it focuses on critical groups rather than on the Morse index. Let $X$ be a reflexive Banach space with ${\rm dim}(X)=\infty$.
\vskip2pt
\noindent
For every topological pair $(A,B)$, with $B\subseteq A\subseteq X$, and $k\in\N$, we denote by $H_k(A,B)$ the $k$-th singular homology group of the pair (in fact a real vector space), for every map $g:(A,B)\to (C,D)$ the homomorphism induced by $g$ is $g_*:H_k(A,B)\to H_k(C,D)$, and the boundary homomorphism is $\partial:H_k(A,B)\to H_{k-1}(A,\emptyset)$ \cite[Definition 6.9]{MMP}.
\vskip2pt
\noindent
We recall some properties of singular homology groups, that we will use in the sequel (for the notion of deformation retract see \cite[Definition 5.3]{MMP}):

\begin{proposition}\label{shg}
{\rm\cite[Corollary 6.15, Propositions 6.14, 6.24, 6.25]{MMP}} Let $(A,B)$ be a topological pair in $X$, $k\in\N$:
\begin{enumroman}
\item\label{shg1} if $B$ is a deformation retract of $A$, then $H_k(X,A)=H_k(X,B)$;
\item\label{shg2} if $A$ is a deformation retract of $X$, then $H_k(X,B)=H_k(A,B)$;
\item\label{shg3} if $A$ is contractible to a point $x\in X$, then $H_k(X,A)=H_k(X,\{x\})$;
\item\label{shg4} if $X$ is contractible to a point $x\in X$, then $H_k(X,\{x\})=0$;
\item\label{shg5} if $i:(A,B)\to(X,B)$, $j:(X,B)\to(X,A)$, and $h:(A,\emptyset)\to(A,B)$ are inclusion mappings, then the following group sequence is exact:
\[\ldots \ H_k(A,B) \ \xrightarrow{i_*} \ H_k(X,B) \ \xrightarrow{j_*} \ H_k(X,A) \ \xrightarrow{h_*\circ\partial} \ H_{k-1}(A,B) \ \ldots\]
\end{enumroman}
\end{proposition}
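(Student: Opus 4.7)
The plan is to reduce everything to the long exact sequence of a triple (which is part (v)) together with the standard homotopy invariance of singular homology; parts (i)--(iv) will then follow formally.

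First I would establish (v). Writing $C_*(Y)$ for the singular chain complex of a space $Y$ (with coefficients in $\R$), and defining the relative complex by $C_*(Y,Z)=C_*(Y)/C_*(Z)$ so that $H_k(Y,Z)$ is by definition the $k$-th homology of that quotient, the triple $B\subseteq A\subseteq X$ produces a short exact sequence of chain complexes
\[
0 \longrightarrow C_*(A)/C_*(B) \longrightarrow C_*(X)/C_*(B) \longrightarrow C_*(X)/C_*(A) \longrightarrow 0.
\]
Applying the zig-zag (snake) lemma yields the long exact sequence of the triple, whose connecting homomorphism is exactly $h_*\circ\partial$ and whose other two maps agree with the homomorphisms induced by the inclusions $i$ and $j$. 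This is precisely the sequence asserted in (v).

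For (i) and (ii) the key observation is that if $Z\subseteq Y$ is a deformation retract of $Y$, then the inclusion $Z\hookrightarrow Y$ is a homotopy equivalence, and by homotopy invariance of singular homology it induces isomorphisms $H_k(Z)\cong H_k(Y)$ for every $k$. Feeding this into the long exact sequence of the pair $(Y,Z)$ (i.e.\ the special case of (v) with ambient space $Y$ and subspace $\emptyset$) forces $H_k(Y,Z)=0$ for all $k\in\N$. For part (i), with $B$ a deformation retract of $A$, the terms $H_k(A,B)$ and $H_{k-1}(A,B)$ in the triple sequence vanish, so exactness makes $j_*:H_k(X,B)\to H_k(X,A)$ an isomorphism. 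For part (ii), with $A$ a deformation retract of $X$, instead the terms $H_k(X,A)$ and $H_{k+1}(X,A)$ vanish, and exactness now makes $i_*:H_k(A,B)\to H_k(X,B)$ an isomorphism.

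Part (iii) is an immediate corollary of (i): a contraction of $A$ to $x\in X$ realises $\{x\}$ as a deformation retract of $A$, whence $H_k(X,A)=H_k(X,\{x\})$. Part (iv) follows from the long exact sequence of the pair $(X,\{x\})$: if $X$ is contractible then $X$ and $\{x\}$ are homotopy equivalent, so the inclusion $\{x\}\hookrightarrow X$ induces isomorphisms $H_k(\{x\})\cong H_k(X)$ for every $k$, and inserting these into the pair sequence degree by degree squeezes $H_k(X,\{x\})$ between a surjection and an injection of isomorphisms, giving $H_k(X,\{x\})=0$. The one genuinely non-elementary input throughout is the homotopy invariance of singular homology, which I would quote as a standard fact (proved via the prism operator, a chain-level chain homotopy between the maps induced by $y\mapsto(y,0)$ and $y\mapsto(y,1)$ from $Y$ into $Y\times[0,1]$). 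No step is a real obstacle: the proposition is a compendium of classical algebraic topology, and in practice one simply cites \cite{MMP} for each item, as the authors have done.
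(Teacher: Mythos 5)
Your outline is correct: the long exact sequence of the triple $B\subseteq A\subseteq X$ obtained from the short exact sequence of quotient chain complexes, combined with homotopy invariance (which kills $H_k(A,B)$ or $H_k(X,A)$ when the relevant inclusion is a homotopy equivalence), yields (v), then (i)--(iv) formally, exactly as in the classical theory. The paper itself gives no proof and simply cites \cite{MMP} for these standard facts, so there is nothing to contrast; the only point worth a word of care is in (iii), where a contraction of $A$ to $x$ makes $\{x\}$ a deformation retract of $A$ only in the weak sense (the homotopy need not fix $x$ throughout), but this is all that is needed since homotopy equivalence of the inclusion already forces $H_k(A,\{x\})=0$.
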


\noindent
Now consider a functional $\Phi\in C^1(X)$. We say that $\Phi$ satisfies the Cerami compactness condition ($(C)$-condition for short), if every sequence $(x_n)$ in $X$ s.t.\
\[|\Phi(x_n)| \le C, \ (1+\|x_n\|)\Phi'(x_n)\to 0\]
has a (strongly) convergent subsequence. We denote by $K(\Phi)$ the set of all critical points of $\Phi$ and set for all $c\in\R$
\[K_c(\Phi) = \big\{x\in K(\Phi):\,\Phi(x)=c\big\}.\]
We say that $x\in K(\Phi)$ is an isolated critical point, if there exists a neighborhood $U$ of $x$ s.t.\
\[K(\Phi)\cap U = \{x\}.\]
For any isolated critical point $x\in K_c(\Phi)$ and $U$ as above, and any $k\in\N$ we define the $k$-th critical group of $\Phi$ at $x$ by setting
\[C_k(\Phi,x) = H_k(\Phi^c\cap U,\Phi^c\cap U\setminus\{x\}),\]
where as usual $\Phi^c=\{\Phi\le c\}$. We note that the definition above is invariant with respect to $U$ by the excision property of homology groups \cite[Axiom 6, p.\ 143]{MMP}. We define the critical groups of $\Phi$ at infinity, by choosing a level $c<\inf_{x\in K(\Phi)}\Phi(x)$ and setting for all $k\in\N$
\[C_k(\Phi,\infty) = H_k(X,\Phi^c)\]
(this definition also is invariant with respect to $c$). Next we recall some properties of critical groups:

\begin{proposition}\label{cgp}
{\rm\cite[Example 6.45 $(a)$, Remark 6.58]{MMP}} Let $\Phi\in C^1(X)$ satisfy the $(C)$-condition:
\begin{enumroman}
\item\label{cgp1} if $x\in K(\Phi)$ is an isolated critical point and a strict local minimizer, then for all $k\in\N$
\[C_k(\Phi,x) = \delta_{k,0}\R;\]
\item\label{cgp2} if $b<c<a$ are real s.t.\ $K_c(\Phi)$ is finite and $K_{c'}(\Phi)=\emptyset$ for all $c'\in[b,a]\setminus\{c\}$, then for all $k\in\N$
\[H_k(\Phi^a,\Phi^b) = \bigoplus_{x\in K_c(\Phi)} C_k(\Phi,x);\]
\item\label{cgp3} {\rm(Poincar\'e-Hopf formula)} if $K(\Phi)$ is finite, all critical groups of $\Phi$ either at critical points or at infinity are finite-dimensional and vanish for $k$ big enough, then
\[\sum_{k=0}^\infty\sum_{x\in K(\Phi)}(-1)^k{\rm dim}(C_k(\Phi,x)) = \sum_{k=0}^\infty(-1)^k{\rm dim}(C_k(\Phi,\infty)).\]
\end{enumroman}
\end{proposition}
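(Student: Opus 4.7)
The plan is to handle the three items separately, reducing each to standard deformation-theoretic consequences of the $(C)$-condition. The single underlying tool is the noncritical interval theorem: if $\Phi$ has no critical values in a compact interval $[\alpha,\beta]$, then $\Phi^\alpha$ is a strong deformation retract of $\Phi^\beta$, proved under $(C)$ by a pseudo-gradient construction. Item \ref{cgp1} is then immediate: since $x\in K_c(\Phi)$ is isolated and a strict local minimizer, a small enough neighborhood $U$ of $x$ satisfies $K(\Phi)\cap U=\{x\}$ and $\Phi(y)>c$ for all $y\in U\setminus\{x\}$, so $\Phi^c\cap U=\{x\}$ while $\Phi^c\cap U\setminus\{x\}=\emptyset$; hence
\[C_k(\Phi,x)=H_k(\{x\},\emptyset)=\delta_{k,0}\R\]
by the elementary computation of the singular homology of a point.

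For \ref{cgp2}, I would localize the topological jump at the unique critical level $c$. Fix $\epsilon>0$ with $b<c-\epsilon<c<c+\epsilon<a$; by hypothesis $\Phi$ has no critical values in $[b,c-\epsilon]$ nor in $[c+\epsilon,a]$, so two applications of the noncritical interval theorem together with items \ref{shg1}, \ref{shg2} of Proposition \ref{shg} give
\[H_k(\Phi^a,\Phi^b)=H_k(\Phi^{c+\epsilon},\Phi^{c-\epsilon}).\]
Writing $K_c(\Phi)=\{x_1,\dots,x_m\}$, choose pairwise disjoint open neighborhoods $U_i\ni x_i$ with $K(\Phi)\cap U_i=\{x_i\}$. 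Via a pseudo-gradient flow for $\Phi$ on the band $\Phi^{c+\epsilon}\setminus\Phi^{c-\epsilon}$, one deforms $\Phi^{c+\epsilon}$ into $\Phi^{c-\epsilon}\cup\bigcup_i U_i$ while keeping $\Phi^{c-\epsilon}$ pointwise fixed; the $(C)$-condition ensures that flow lines are trapped only near critical points. Excision across the disjoint $U_i$, plus additivity of homology under disjoint unions, then yields
\[H_k(\Phi^{c+\epsilon},\Phi^{c-\epsilon}) = \bigoplus_{i=1}^m H_k(\Phi^{c+\epsilon}\cap U_i,\Phi^{c-\epsilon}\cap U_i),\]
and a Gromoll--Meyer-type local deformation inside each $U_i$ identifies each summand with $C_k(\Phi,x_i)$ in the sense of the original admissible pair $(\Phi^c\cap U_i,\Phi^c\cap U_i\setminus\{x_i\})$.

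For \ref{cgp3}, enumerate the finitely many critical values $c_1<\cdots<c_m$ and insert regular levels $a_0<c_1<a_1<c_2<\cdots<c_m<a_m$. Applying \ref{cgp2} to each triple $(b,c,a)=(a_{i-1},c_i,a_i)$ gives
\[H_k(\Phi^{a_i},\Phi^{a_{i-1}})=\bigoplus_{x\in K_{c_i}(\Phi)}C_k(\Phi,x),\]
and the long exact sequence of item \ref{shg5} of Proposition \ref{shg} applied to the triple $(\Phi^{a_i},\Phi^{a_{i-1}},\Phi^{a_{i-2}})$, combined with the finite-dimensionality/vanishing hypothesis, yields the additivity of the Euler characteristic $\chi(A,B)=\sum_k(-1)^k\dim H_k(A,B)$. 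Iterating,
\[\chi(\Phi^{a_m},\Phi^{a_0})=\sum_{i=1}^m\chi(\Phi^{a_i},\Phi^{a_{i-1}})=\sum_{x\in K(\Phi)}\sum_{k\ge 0}(-1)^k\dim C_k(\Phi,x).\]
Since $a_m$ exceeds every critical value, the noncritical interval theorem on $[a_m,\infty)$ makes $\Phi^{a_m}$ a deformation retract of $X$, so item \ref{shg2} of Proposition \ref{shg} gives $H_k(X,\Phi^{a_0})=H_k(\Phi^{a_m},\Phi^{a_0})$; and by the choice $a_0<\inf_{K(\Phi)}\Phi$, the left-hand side is $C_k(\Phi,\infty)$ by definition. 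Passing to Euler characteristics completes the Poincar\'e--Hopf identity.

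The main technical obstacle lies in \ref{cgp2}: one must verify that the topological difference between $\Phi^{c+\epsilon}$ and $\Phi^{c-\epsilon}$ really concentrates at the finitely many critical points of level $c$, and that the contribution at each $x_i$ agrees with the critical group computed from the original admissible pair. This is precisely where the Cerami condition, through the pseudo-gradient flow near isolated critical points, is indispensable; once \ref{cgp2} is secured, \ref{cgp3} reduces to formal manipulations with the Euler characteristic of the resulting finite-dimensional graded vector spaces.
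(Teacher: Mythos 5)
The paper does not prove this proposition at all: it is quoted verbatim from \cite[Example 6.45(a), Remark 6.58]{MMP} as part of the background material in Subsection \ref{ss22}, so there is no in-paper argument to compare yours against. That said, your outline is the standard deformation-theoretic proof and is essentially correct. Item \ref{cgp1} is exactly right. For item \ref{cgp2}, the route actually taken in \cite{MMP} (and in Chang's book) is slightly cleaner than your Gromoll--Meyer-type deformation onto $\Phi^{c-\eps}\cup\bigcup_i U_i$: one uses the second deformation theorem (valid under the $(C)$-condition) twice to get $H_k(\Phi^{c+\eps},\Phi^{c-\eps})\cong H_k(\Phi^c,\Phi^c\setminus K_c)$, and then excises the pairwise disjoint neighborhoods $U_i$ directly; this avoids having to control where the flow lines of the pseudo-gradient field accumulate, which is the delicate point you correctly flag as the main obstacle in your version. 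Your item \ref{cgp3} is the usual telescoping of Euler characteristics over the finitely many critical bands via the exact sequence of a triple, plus the retraction of $X$ onto a supercritical sublevel set; this matches the standard proof of the Poincar\'e--Hopf formula. Two small points of care: the homology facts you invoke from Proposition \ref{shg} are stated there with the ambient space $X$ in the first slot, so you need their (equally standard) versions for general pairs and triples; and in \ref{cgp3} one must check that each band $H_k(\Phi^{a_i},\Phi^{a_{i-1}})$ has a well-defined (finite) Euler characteristic before summing, which follows from the finite-dimensionality and vanishing hypotheses exactly as you indicate.
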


\noindent
Finally we recall a homotopy invariance property of critical groups at uniformly isolated critical points:

\begin{proposition}\label{inv}
{\rm\cite[Theorem 5.6]{C}} Let $(\Psi_\tau)_{\tau\in[0,1]}$ be a family of $C^1$-functionals satisfying the $(C)$-condition, $x\in X$, and $U\subset X$ be a neighborhood of $x$ s.t.\ $K(\Psi_\tau)\cap U=\{x\}$ for all $\tau\in[0,1]$. Then, for all $k\in\N$ and all $\tau,\tau'\in[0,1]$
\[C_k(\Psi_\tau,x) = C_k(\Psi_{\tau'},x).\]
\end{proposition}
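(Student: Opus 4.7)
The plan is to reduce the statement to a local constancy claim: since $[0,1]$ is connected and isomorphism of vector spaces is an equivalence relation, it suffices to prove that for every $\tau_0\in[0,1]$ there exists $\delta>0$ such that $C_k(\Psi_\tau,x)\cong C_k(\Psi_{\tau_0},x)$ for all $\tau\in[0,1]$ with $|\tau-\tau_0|\le\delta$ and all $k\in\N$. So I would fix $\tau_0$ and work in a small ball $\overline{B}_{2\rho}(x)\subset U$.

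The first step is to produce uniform nondegeneracy on an annulus. Because $K(\Psi_{\tau_0})\cap U=\{x\}$ and $\Psi_{\tau_0}$ satisfies the $(C)$-condition, a standard contradiction argument gives $\alpha>0$ such that $(1+\|y\|)\|\Psi_{\tau_0}'(y)\|_{X^*}\ge 2\alpha$ for all $y$ in the annulus $A=\overline{B}_{2\rho}(x)\setminus B_\rho(x)$. Assuming the family is continuous in $\tau$ (uniformly on bounded sets, as is standard for a $C^1$-homotopy), I would then choose $\delta>0$ such that the bound $\ge\alpha$ survives on $A$ for all $\tau$ with $|\tau-\tau_0|\le\delta$, and such that the critical values $c_\tau=\Psi_\tau(x)$ lie in a small interval around $c_{\tau_0}$.

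The second step is the construction of a parametrized pseudo-gradient. Using a locally finite open cover of $A$ and a subordinate partition of unity, I would build a map $V:[\tau_0-\delta,\tau_0+\delta]\times(U\setminus\{x\})\to X$ that is continuous in $\tau$, locally Lipschitz in $y$, bounded by $\|V(\tau,y)\|\le 1$, and satisfies
\[\langle \Psi_\tau'(y),V(\tau,y)\rangle\ \ge\ \tfrac{1}{2}\min\bigl\{1,(1+\|y\|)\|\Psi_\tau'(y)\|_{X^*}\bigr\}.\]
The flow $\eta_\tau(t,\cdot)$ generated by $-V(\tau,\cdot)$ is then defined as long as trajectories avoid $x$ and remain inside $U$; on the annulus $A$ the uniform lower bound forces $\Psi_\tau\circ\eta_\tau(\cdot,y)$ to decrease at a uniform rate.

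The third step pushes sublevel sets of $\Psi_{\tau_0}$ onto those of $\Psi_\tau$. Linearly interpolating between $\tau_0$ and $\tau$ and applying a second-deformation-lemma type argument along the resulting path (or, equivalently, working with the product functional on $[\tau_0-\delta,\tau_0+\delta]\times X$), I would produce a homeomorphism of topological pairs
\[
\bigl(\Psi_{\tau_0}^{c_{\tau_0}}\cap B_\rho(x),\,\Psi_{\tau_0}^{c_{\tau_0}}\cap B_\rho(x)\setminus\{x\}\bigr)\ \simeq\ \bigl(\Psi_\tau^{c_\tau}\cap B_\rho(x),\,\Psi_\tau^{c_\tau}\cap B_\rho(x)\setminus\{x\}\bigr),
\]
fixing $x$. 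Passing to singular homology and invoking the excision property (which guarantees that the resulting groups are the critical groups regardless of the chosen neighborhood), I would conclude $C_k(\Psi_\tau,x)\cong C_k(\Psi_{\tau_0},x)$ for all $k\in\N$. The main obstacle is the second step: ensuring that the pseudo-gradient and its flow behave uniformly in the parameter, which requires upgrading the single-functional $(C)$-condition to a uniform-in-$\tau$ lower bound on the annulus; the uniqueness hypothesis $K(\Psi_\tau)\cap U=\{x\}$ for \emph{every} $\tau\in[0,1]$ is precisely what rules out the appearance of spurious critical points during the deformation and allows this uniform estimate.
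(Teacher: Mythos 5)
The paper does not actually prove this proposition: it is imported from \cite{C} (Theorem 5.6 there) with only the remark that the $C^2$/Hilbert-space argument ``can be proved similarly'' for $C^1$ functionals on a Banach space, so there is no in-paper argument to compare yours against. Your sketch follows the standard continuation argument, and its skeleton is sound: reduction to local constancy via connectedness of $[0,1]$; the uniform lower bound for $(1+\|y\|)\|\Psi_\tau'(y)\|_{X^*}$ on an annulus, which, as you correctly flag, needs a continuity-in-$\tau$ hypothesis that is absent from the statement as quoted but is satisfied by the affine family $\Psi_\tau=(1-\tau)\Phi+\tau\Phi_+$ used in Lemma \ref{cgm} (note also that invoking the $(C)$-condition on the annulus tacitly uses that $\Psi_{\tau_0}$ is bounded there, which holds for these energy functionals but not for an arbitrary $C^1$ map); and a parametrized pseudo-gradient flow.

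The genuine gap is in your third step. A locally Lipschitz pseudo-gradient flow will not in general produce a \emph{homeomorphism} between the pairs $\bigl(\Psi_{\tau_0}^{c_{\tau_0}}\cap B_\rho(x),\,\Psi_{\tau_0}^{c_{\tau_0}}\cap B_\rho(x)\setminus\{x\}\bigr)$ and $\bigl(\Psi_{\tau}^{c_{\tau}}\cap B_\rho(x),\,\Psi_{\tau}^{c_{\tau}}\cap B_\rho(x)\setminus\{x\}\bigr)$: the flow does not carry one sublevel set onto the other, the exit times from the moving sublevel sets are not obviously continuous, and the critical value $c_\tau=\Psi_\tau(x)$ drifts with $\tau$. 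What the deformation actually yields is a homotopy equivalence (isomorphisms in homology), and the clean way to package this --- the one used in \cite{C} and the one that transfers to the $C^1$/Banach framework of \cite{MMP} --- is via Gromoll--Meyer (isolating) pairs: from the parametrized flow one builds a single pair $(W,W_-)$ with $W\subset U$ that is an admissible pair for $x$ \emph{simultaneously} for all $\Psi_\tau$ with $|\tau-\tau_0|\le\delta$, and then invokes the independence of $C_k(\Psi_\tau,x)=H_k(W,W_-)$ from the choice of pair. If you replace the claimed homeomorphism by this construction (or by an explicit deformation-retract argument that also handles the drift of $c_\tau$), the proof closes; as written, the final step asserts more than the flow delivers.
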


\noindent
Note that reference \cite{C} is for $C^2$-functional on Hilbert spaces, but homotopy invariance can be proved similarly in our framework.

\section{Constant sign solutions}\label{sec3}

\noindent
Here we prove the existence of two constant sign solutions for \eqref{dir}, one positive and one negative. To achieve such result, we need to introduce some truncations of the energy functional $\Phi$. Note that by ${\bf H}$ \ref{h4} we have for a.e.\ $x\in\Omega$
\[f(x,0) = 0.\]
So we set for all $(x,t)\in\Omega\times\R$
\[f_\pm(x,t) = f(x,\pm t^\pm), \ F_\pm(x,t) = \int_0^t f_\pm(x,\tau)\,d\tau.\]
The mappings $f_\pm:\Omega\times\R\to\R$ satisfy ${\bf H}_0$. Also set for all $u\in\w$
\[\Phi_\pm(u) = \frac{\|u\|_{s,p}^p}{p}-\int_\Omega F_\pm(x,u)\,dx.\]
Reasoning as in Subsection \ref{ss21} we see that both functionals $\Phi_\pm\in C^1(\w)$ are sequentially weakly l.s.c., and enjoy the compactness property of Proposition \ref{bps}.

\begin{lemma}\label{cer}
Let ${\bf H}$ hold. Then, both $\Phi$ and $\Phi_\pm$ satisfy the $(C)$-condition.
\end{lemma}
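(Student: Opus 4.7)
\noindent
My plan is to verify the Cerami condition by proving that Cerami sequences are bounded in $\w$; Proposition \ref{bps} will then furnish a strongly convergent subsequence. I will focus on $\Phi$, since for $\Phi_\pm$ a preliminary step reduces matters to this case. Indeed, if $(u_n)$ is a Cerami sequence for $\Phi_\pm$, testing $\Phi_\pm'(u_n)$ against $\mp u_n^\mp\in\w$ and noting that $f_\pm(x,u_n)\,u_n^\mp=0$ pointwise (since by ${\bf H}$ \ref{h4} we have $f(x,0)=0$), I get from Proposition \ref{pro} that
\[
\|u_n^\mp\|_{s,p}^p \le \langle\fpl u_n,\mp u_n^\mp\rangle = \langle\Phi_\pm'(u_n),\mp u_n^\mp\rangle,
\]
whose right-hand side is $o(1)$ by the Cerami estimate. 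Hence $u_n^\mp\to 0$ in $\w$, and the remainder of the argument treats $u_n^\pm$ exactly as in the case of $\Phi$.

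\noindent
For $\Phi$, let $(u_n)$ be a Cerami sequence and suppose for contradiction that $\|u_n\|_{s,p}\to\infty$. The argument splits into two steps. In the first step I combine the functional and its derivative to write
\[
p\,\Phi(u_n) - \langle\Phi'(u_n),u_n\rangle = \int_\Omega \big(f(x,u_n)u_n - pF(x,u_n)\big)\,dx,
\]
with the left-hand side bounded by the Cerami condition. By ${\bf H}$ \ref{h3} there exist $\sigma,R>0$ such that $f(x,t)t-pF(x,t)\ge\sigma|t|^q$ for all $|t|\ge R$ and a.e.\ $x\in\Omega$, while on the complementary region the integrand is uniformly bounded thanks to ${\bf H}$ \ref{h1}. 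This produces the uniform bound $\|u_n\|_q\le C$.

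\noindent
In the second step, $\langle\Phi'(u_n),u_n\rangle\to 0$ together with ${\bf H}$ \ref{h1} yields
\[
\|u_n\|_{s,p}^p = \langle\Phi'(u_n),u_n\rangle + \int_\Omega f(x,u_n)u_n\,dx \le o(1) + C\big(1+\|u_n\|_r^r\big).
\]
If $r\le q$, then $L^q(\Omega)\hookrightarrow L^r(\Omega)$ already contradicts $\|u_n\|_{s,p}\to\infty$. If $r>q$, I interpolate between $L^q$ and $L^{p^*_s}$, $\|u_n\|_r\le\|u_n\|_q^{1-\theta}\|u_n\|_{p^*_s}^\theta$ with $\theta=(1/q-1/r)/(1/q-1/p^*_s)$; combined with the Sobolev embedding $\|u_n\|_{p^*_s}\le C\|u_n\|_{s,p}$ and the bound from the first step, this gives $\|u_n\|_r^r\le C\|u_n\|_{s,p}^{r\theta}$. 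A direct computation using $p^*_s=Np/(N-ps)$ shows that $r\theta=(r-q)p^*_s/(p^*_s-q)$ is strictly less than $p$ precisely when $q>\frac{N}{ps}(r-p)$, which is the lower bound in ${\bf H}$ \ref{h3}. Plugging back, $\|u_n\|_{s,p}^p\le C(1+\|u_n\|_{s,p}^{r\theta})$ with $r\theta<p$, contradicting $\|u_n\|_{s,p}\to\infty$.

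\noindent
The main obstacle I expect is the interpolation calibration in the second step: the exponent $q$ in ${\bf H}$ \ref{h3} is tuned precisely so that the superlinear contribution $\|u_n\|_r^r$ cannot catch up with $\|u_n\|_{s,p}^p$, and the equivalence $r\theta<p\iff q>\frac{N}{ps}(r-p)$ is the only point mixing all three parameters $N,p,s$. A secondary bookkeeping issue arises in the preliminary reduction for $\Phi_\pm$, because the fractional norm does not split as $\|u\|_{s,p}^p=\|u^+\|_{s,p}^p+\|u^-\|_{s,p}^p$; Proposition \ref{pro} provides exactly the one-sided inequality needed to bypass this. I remark that ${\bf H}$ \ref{h2} and \ref{h4} are not used here and will enter only later, for the mountain pass geometry and the computation of critical groups at the origin.
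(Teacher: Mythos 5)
Your proposal is correct and follows essentially the same route as the paper: the identity $p\Phi(u_n)-\langle\Phi'(u_n),u_n\rangle=\int_\Omega(f(x,u_n)u_n-pF(x,u_n))\,dx$ combined with ${\bf H}$ \ref{h3} to get the $L^q$-bound, then interpolation between $L^q$ and $L^{p^*_s}$ with the same computation $r\theta<p\iff q>\frac{N}{ps}(r-p)$, and finally Proposition \ref{bps}. The only (harmless) deviations are that you argue by contradiction and handle $r\le q$ by embedding where the paper assumes $q<r$ without loss of generality, and that your reduction for $\Phi_\pm$ via Proposition \ref{pro} gives $u_n^\mp\to 0$ in $\w$ rather than merely in $L^q$, which is a slightly sharper version of the paper's closing remark.
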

\begin{proof}
We deal with $\Phi$. Let $(u_n)$ be a sequence in $\w$ s.t.\ $|\Phi(u_n)|\le C$ and $(1+\|u_n\|_{s,p})\Phi'(u_n)\to 0$ as $n\to\infty$. So we have for all $n\in\N$
\beq\label{cer1}
\Big|\|u_n\|_{s,p}^p-p\int_\Omega F(x,u_n)\,dx\Big| \le C,
\eeq
and there exists a sequence $(\eps_n)$ s.t.\ $\eps_n\to 0^+$ and for all $\varphi\in\w$
\beq\label{cer2}
\Big|\langle\fpl u_n,\varphi\rangle-\int_\Omega f(x,u_n)\varphi\,dx\Big| \le \frac{\eps_n\|\varphi\|_{s,p}}{1+\|u_n\|_{s,p}}.
\eeq
Set $\varphi=u_n$ in \eqref{cer2}:
\[-\|u_n\|_{s,p}^p+\int_\Omega f(x,u_n)u_n\,dx \le \eps_n.\]
Add \eqref{cer1}:
\[\int_\Omega\big(f(x,u_n)u_n-pF(x,u_n)\big)\,dx \le C.\]
By ${\bf H}$ \ref{h3} we can find $T,\beta>0$ s.t.\ for a.e.\ $x\in\Omega$ and all $|t|>T$
\[f(x,t)t-pF(x,t) \ge \beta|t|^q.\]
Besides, by ${\bf H}$ \ref{h1} we have for a.e.\ $x\in\Omega$ and all $t\in\R$
\beq\label{cer3}
|F(x,t)| \le C(1+|t|^r).
\eeq
By the previous relations we have for all $n\in\N$
\begin{align*}
C &\ge \int_{\{|u_n|>T\}}\beta|u_n|^q\,dx+\int_{\{|u_n|\le T\}}\big(f(x,u_n)u_n-pF(x,u_n)\big)\,dx \\
&\ge \beta\|u_n\|_q^q+\int_{\{|u_n|\le T\}}\big(f(x,u_n)u_n-pF(x,u_n)-\beta|u_n|^q\big)\,dx \\
&\ge \beta\|u_n\|_q^q-C(1+T^r+T^q).
\end{align*}
So $(u_n)$ is bounded in $L^q(\Omega)$. By $r<p^*_s$ we have
\[\frac{N}{ps}(r-p) < r,\]
hence in ${\bf H}$ \ref{h3} we may choose $q<r$ without loss of generality. Therefore, there exists $\tau\in(0,1)$ s.t.\
\[\frac{1}{r} = \frac{1-\tau}{q}+\frac{\tau}{p^*_s}.\]
By the interpolation inequality and the embedding $\w\hookrightarrow L^{p^*_s}(\Omega)$ we have
\[\|u_n\|_r \le \|u_n\|_q^{1-\tau}\|u_n\|_{p^*_s}^\tau \le C\|u_n\|_{s,p}^\tau.\]
By \eqref{cer1} again and \eqref{cer3} we have
\begin{align*}
\|u_n\|_{s,p}^p &\le p\int_\Omega F(x,u_n)\,dx+C \\
&\le C(1+\|u_n\|_r^r) \\
&\le C(1+\|u_n\|_{s,p}^{\tau r}).
\end{align*}
In fact we have $\tau r<p$, indeed by ${\bf H}$ \ref{h3}
\[q > \frac{(r-p)p^*_s}{p^*_s-p} \ \Longrightarrow \ \tau = \frac{(r-q)p^*_s}{(p^*_s-q)r} < \frac{p}{r}.\]
Therefore, $(u_n)$ is bounded in $\w$ as well. Besides, $\Phi'(u_n)\to 0$. By Proposition \ref{bps}, $(u_n)$ has a convergent subsequence, so $\Phi$ satisfies the $(C)$-condition.
\vskip2pt
\noindent
The argument for $\Phi_\pm$ is analogous, with the difference that we test \eqref{cer2} with $u^\pm_n$ and find that $(u_n^\pm)$ is bounded in $L^q(\Omega)$, while $u_n^\mp\to 0$ in $L^q(\Omega)$.
\end{proof}

\noindent
Since $f(\cdot,0)=0$, problem \eqref{dir} obviously admits the trivial solution $u=0$. The next lemma focuses on the nature of such solution as a critical point:

\begin{lemma}\label{zer}
Let ${\bf H}$ hold. Then, $0$ is a local minimizer of both $\Phi$ and $\Phi_\pm$.
\end{lemma}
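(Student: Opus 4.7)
The plan is to show that hypotheses ${\bf H}$ \ref{h1} and \ref{h4} together force $F$ to be controlled near the origin by a $(p-1)$-superhomogeneous expression, which then lets the seminorm term in $\Phi$ dominate on a small ball of $\w$.

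First I would derive the basic estimate on $F$. From ${\bf H}$ \ref{h4}, for any $\eps>0$ there exists $\delta>0$ such that $|f(x,t)|\le\eps|t|^{p-1}$ for a.e.\ $x\in\Omega$ and all $|t|\le\delta$, hence $|F(x,t)|\le(\eps/p)|t|^p$ on the same range. Combined with ${\bf H}$ \ref{h1}, which controls the growth for $|t|\ge\delta$, I get a constant $C_\eps>0$ with
\[|F(x,t)| \le \frac{\eps}{p}|t|^p + C_\eps|t|^r \quad\text{for a.e.\ } x\in\Omega,\ \text{all } t\in\R.\]

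Next I would plug this into $\Phi$. Using the variational characterization of $\lambda_1$ from Proposition \ref{pev} and the continuous embedding $\w\hookrightarrow L^r(\Omega)$, I obtain for every $u\in\w$
\[\Phi(u) \ge \frac{1}{p}\|u\|_{s,p}^p - \frac{\eps}{p}\|u\|_p^p - C_\eps\|u\|_r^r \ge \frac{1}{p}\Big(1-\frac{\eps}{\lambda_1}\Big)\|u\|_{s,p}^p - C'_\eps\|u\|_{s,p}^r.\]
Choosing $\eps<\lambda_1$ gives $\Phi(u)\ge c_1\|u\|_{s,p}^p - c_2\|u\|_{s,p}^r$ with $c_1,c_2>0$. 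Since $r>p$, the right-hand side is strictly positive for $0<\|u\|_{s,p}\le\rho$ with $\rho>0$ sufficiently small, which yields $\Phi(u)>0=\Phi(0)$ on such a punctured $\w$-ball. Thus $0$ is a (strict) local minimizer of $\Phi$ in the topology of $\w$.

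For $\Phi_\pm$ the argument is identical: the truncated reactions $f_\pm(x,t)=f(x,\pm t^\pm)$ satisfy $|f_\pm(x,t)|\le c_0(1+|t|^{r-1})$ and $\lim_{t\to 0}f_\pm(x,t)/|t|^{p-2}t=0$ uniformly in $x$, so the same pointwise bound on $F_\pm$ holds and the chain of inequalities above goes through verbatim with $F$ replaced by $F_\pm$. There is no real obstacle here — the only subtle point is the uniform estimate that passes from ${\bf H}$ \ref{h4} (local, pointwise in $t$) to a global inequality on $\R$, which is why the subcritical growth hypothesis ${\bf H}$ \ref{h1} is needed to absorb the regime $|t|\ge\delta$ into the $|t|^r$ term.
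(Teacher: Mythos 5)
Your proof is correct, but it follows a genuinely different route from the paper's. You work directly in the Sobolev topology: merging ${\bf H}$ \ref{h4} (near the origin) with ${\bf H}$ \ref{h1} (away from it) into the global bound $|F(x,t)|\le(\eps/p)|t|^p+C_\eps|t|^r$, and then using the embedding $\w\hookrightarrow L^r(\Omega)$ together with the variational characterization of $\lambda_1$ to get $\Phi(u)\ge c_1\|u\|_{s,p}^p-c_2\|u\|_{s,p}^r>0$ on a small punctured ball of $\w$ (the verification that $f_\pm$ inherit the relevant hypotheses is also correct). The paper instead uses only the near-origin estimate coming from ${\bf H}$ \ref{h4}: it restricts to the weighted H\"older ball $\|u\|_{0,s}\le\delta/{\rm diam}(\Omega)^s$, on which $\|u\|_\infty\le\delta$, concludes that $0$ is a local minimizer in the $\cs$-topology, and then invokes the Sobolev-versus-H\"older equivalence of Proposition \ref{svh} to transfer the conclusion to $\w$. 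Your argument is more elementary in that it bypasses Proposition \ref{svh} entirely, and it gives slightly more: a quantitative lower bound of the form $c_1\rho^p-c_2\rho^r$ on the sphere $\|u\|_{s,p}=\rho$, which would also shortcut the Ekeland-type argument used in the proof of Theorem \ref{css} to establish \eqref{css1}. The paper's route, on the other hand, only needs the local behavior of $f$ near $t=0$ and is the pattern that survives when a global subcritical control of $F$ in terms of an embedded $L^r$-norm is not available; here, with ${\bf H}$ \ref{h1} in force and $r<p^*_s$, both arguments are equally valid.
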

\begin{proof}
Again we consider $\Phi$. By ${\bf H}$ \ref{h4}, for all $\eps>0$ we can find $\delta>0$ s.t.\ for a.e.\ $x\in\Omega$ and all $|t|\le\delta$
\[|F(x,t)| \le \eps|t|^p.\]
Set
\[\rho = \frac{\delta}{{\rm diam}(\Omega)^s} > 0.\]
Then, for all $u\in\w\cap\cs$ with $\|u\|_{0,s}\le\rho$ we have $\|u\|_\infty\le\delta$. Indeed, for all $x\in\Omega$
\[|u(x)| \le \rho\ds(x) \le \delta.\]
So, for all $u\in\w\cap\cs$, $\|u\|_{0,s}\le\rho$ we have
\begin{align*}
\Phi(u) &\ge \frac{\|u\|_{s,p}^p}{p}-\eps\|u\|_p^p \\
&\ge \Big(\frac{1}{p}-\frac{\eps}{\lambda_1}\Big)\|u\|_{s,p}^p,
\end{align*}
with $\lambda_1>0$ defined as in Proposition \ref{pev}. Choosing $\eps<\lambda_1/p$, we see that $\Phi(u)\ge 0$, hence $0$ is a local minimizer of $\Phi$ in $\cs$. By Proposition \ref{svh}, it is such in $\w$ as well.
\vskip2pt
\noindent
The argument for $\Phi_\pm$ is analogous.
\end{proof}

\noindent
In the following arguments, we will always assume that both $\Phi$ and $\Phi_\pm$ have a {\em strict} local minimum at $0$, otherwise the existence of infinitely many critical points would follow immediately. Anyway, such minumum is not {\em global}. Indeed, due to the $(p-1)$-superlinear growth of $f(x,\cdot)$, the energy functionals are unbounded from below:

\begin{lemma}\label{inf}
Let ${\bf H}$ hold. Then, both
\[\inf_{u\in\w}\Phi(u) = \inf_{u\in\w}\Phi_\pm(u) = -\infty.\]
\end{lemma}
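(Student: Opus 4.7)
The plan is to test the functionals along the ray spanned by the positive first eigenfunction $e_1\in\mathrm{int}(C^0_s(\overline\Omega)_+)$ from Proposition \ref{pev}, and use the $(p-1)$-superlinearity ${\bf H}$ \ref{h2} to show the restriction tends to $-\infty$.

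First I would upgrade ${\bf H}$ \ref{h2} to a uniform lower bound: for every $M>0$ there exist $T_M>0$ and a constant $C_M>0$ such that
\[
F(x,t) \ge M|t|^p-C_M \qquad \text{for a.e.\ } x\in\Omega \text{ and all } t\in\R.
\]
Indeed, by ${\bf H}$ \ref{h2} there is $T_M>0$ with $F(x,t)\ge M|t|^p$ for a.e.\ $x$ and all $|t|>T_M$, while ${\bf H}$ \ref{h1} gives $|F(x,t)|\le c_0(|t|+|t|^r/r)$, so on $\{|t|\le T_M\}$ the quantity $M|t|^p-F(x,t)$ is bounded above by some $C_M>0$. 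Combining the two cases yields the displayed inequality.

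Next, fix $t>0$ and evaluate
\[
\Phi(t e_1) \;=\; \frac{t^p\|e_1\|_{s,p}^p}{p}-\int_\Omega F(x,t e_1)\,dx
\;\le\; t^p\Big(\frac{\|e_1\|_{s,p}^p}{p}-M\|e_1\|_p^p\Big)+C_M|\Omega|,
\]
where the last inequality uses the bound derived above together with $\|e_1\|_p=1$ (so $\|e_1\|_p^p=1$ anyway). Since $M>0$ is arbitrary, choose $M$ so large that the coefficient of $t^p$ is strictly negative; letting $t\to+\infty$ gives $\Phi(te_1)\to-\infty$, hence $\inf_{\w}\Phi=-\infty$.

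For $\Phi_+$, since $e_1>0$ in $\Omega$ we have $(te_1)^+=te_1$, so $F_+(x,te_1)=F(x,te_1)$ and therefore $\Phi_+(te_1)=\Phi(te_1)\to-\infty$. For $\Phi_-$ the same argument applied to $-te_1$ (with $(-te_1)^-=te_1$) gives $\Phi_-(-te_1)=\Phi(-te_1)\to-\infty$, noting that ${\bf H}$ \ref{h2} is symmetric in $t$ so the uniform lower bound on $F$ produced above is valid for all $t\in\R$. I do not foresee a genuine obstacle here; the only delicate point is the passage from the pointwise limit in ${\bf H}$ \ref{h2} to the uniform lower bound on all of $\R$, which is handled by the growth condition ${\bf H}$ \ref{h1} on the bounded region $\{|t|\le T_M\}$.
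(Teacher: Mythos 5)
Your proposal is correct and follows essentially the same route as the paper: upgrade ${\bf H}$ \ref{h2} to the uniform bound $F(x,t)\ge M|t|^p-C_M$ via the growth condition ${\bf H}$ \ref{h1} on the bounded set $\{|t|\le T_M\}$, then test along the ray $\tau e_1$ and choose $M$ large enough (the paper writes the coefficient as $\lambda_1/p-K$ using $\|e_1\|_{s,p}^p=\lambda_1$, which is the same computation). Your explicit observation that $\Phi_\pm(\pm\tau e_1)=\Phi(\pm\tau e_1)$ is a clean way of making precise what the paper dismisses as ``analogous.''
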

\begin{proof}
Again we deal with $\Phi$. By ${\bf H}$ \ref{h2}, for all $K>0$ we can find $T>0$ s.t.\ for a.e.\ $x\in\Omega$ and all $|t|>T$ there holds
\[F(x,t) \ge K|t|^p.\]
Recalling \eqref{cer3}, we find $C_K>0$ s.t.\ for a.e.\ $x\in\Omega$ and all $t\in\R$
\[F(x,t) \ge K|t|^p-C_K.\]
Let $e_1\in{\rm int}(\cs_+)$ be defined by Proposition \ref{pev}. Then, for all $\tau>0$ we have
\begin{align*}
\Phi(\tau e_1) &\le \frac{\|\tau e_1\|_{s,p}^p}{p}-\int_\Omega\big(K|\tau e_1|^p-C_K\big)\,dx \\
&\le \Big(\frac{\lambda_1}{p}-K\Big)\tau^p+C.
\end{align*}
Choosing $K>\lambda_1/p$, we find
\[\lim_{\tau\to\infty}\Phi(\tau e_1) = -\infty.\]
Therefore, $\Phi$ is unbounded from below in $\w$. The argument for $\Phi_\pm$ is analogous.
\end{proof}

\noindent
The previous lemmas allow us to infer that $\Phi_\pm$ satisfy a mountain pass type geometry around the origin, which along with the $(C)$-condition leads to the following existence result:

\begin{theorem}\label{css}
Let ${\bf H}$ hold. Then, \eqref{dir} has at least two solutions $u_\pm\in\pm{\rm int}(\cs_+)$ s.t.\ $\Phi(u_\pm)>0$.
\end{theorem}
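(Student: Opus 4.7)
The plan is to apply a mountain pass argument to the truncated functionals $\Phi_\pm$ separately: any nontrivial critical point of $\Phi_+$ (resp.\ $\Phi_-$) will be forced to be non-negative (resp.\ non-positive), hence will solve the original problem \eqref{dir}, and the regularity and strong maximum principle tools of Section \ref{sec2} will then push it into $\pm\,\mathrm{int}(\cs_+)$.

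For $u_+$, the three preceding lemmas assemble the mountain pass geometry for $\Phi_+$ in $\w$: Lemma \ref{cer} yields the $(C)$-condition; Lemma \ref{zer}, together with the strict-minimum convention recorded just after it, says that $0$ is a strict local minimizer; and Lemma \ref{inf}, applied along the ray $\tau e_1>0$ (for which $F_+(\cdot,\tau e_1)=F(\cdot,\tau e_1)$, so that $\Phi_+(\tau e_1)=\Phi(\tau e_1)\to-\infty$), exhibits $\tau_0 e_1$ with $\|\tau_0 e_1\|_{s,p}$ arbitrarily large and $\Phi_+(\tau_0 e_1)<0$. The mountain pass theorem then produces $u_+\in\w$ with
\[\Phi_+'(u_+)=0,\qquad \Phi_+(u_+)>0,\]
so in particular $u_+\neq 0$.

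Next I would show $u_+\ge 0$ in $\Omega$. Testing $\Phi_+'(u_+)=0$ against $-u_+^-\in\w$, and using that $f_+(x,t)=f(x,t^+)$ together with $u_+^+u_+^-\equiv 0$ and $f(\cdot,0)=0$ (which follows from \ref{h4}), the reaction term vanishes, and Proposition \ref{pro} gives
\[\|u_+^-\|_{s,p}^p\le\langle\fpl u_+,-u_+^-\rangle=\int_\Omega f(x,u_+^+)(-u_+^-)\,dx=0,\]
hence $u_+^-=0$. Therefore $f_+(x,u_+)=f(x,u_+)$ and $u_+$ is a nontrivial solution of \eqref{dir} with $\Phi(u_+)=\Phi_+(u_+)>0$. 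Propositions \ref{apb} and \ref{reg} place $u_+$ in $\cs$. Finally, invoking \ref{h1} and \ref{h4} together with the $L^\infty$-bound on $u_+$, one can fix $K>0$ large enough that the function $g(t):=K|t|^{p-2}t\in C^0(\R)\cap BV_{\mathrm{loc}}(\R)$ satisfies $f(x,u_+)+g(u_+)\ge g(0)=0$ a.e.\ in $\Omega$, so that $\fpl u_+ + g(u_+)\ge g(0)$ weakly in $\Omega$; Proposition \ref{smp} then forces $u_+\in\mathrm{int}(\cs_+)$. The construction of $u_-\in-\mathrm{int}(\cs_+)$ is completely symmetric, working with $\Phi_-$ and the ray $-\tau e_1$.

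The two delicate points I anticipate are the following. First, converting the strict local minimum of $\Phi_+$ at $0$ (obtained in the $\cs$-topology in Lemma \ref{zer} and transferred to $\w$ via Proposition \ref{svh}) into genuine mountain pass geometry with a uniformly separated sphere, which is standard but uses the $(C)$-condition in an essential way. Second, choosing $g$ for the strong maximum principle: one must combine the small-value control provided by \ref{h4} with the polynomial bound \ref{h1} on the bounded range of $u_+$ to get a single $g\in C^0\cap BV_{\mathrm{loc}}$ that dominates $-f(x,u_+)$ on all of $[0,\|u_+\|_\infty]$. Once these are handled, sign extraction, regularity, and positivity upgrade are immediate from the toolbox of Subsection \ref{ss21}.
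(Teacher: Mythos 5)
Your proposal is correct and follows essentially the same route as the paper: mountain pass for the truncations $\Phi_\pm$ (with the sphere-separation issue resolved via the $(C)$-condition, which the paper implements through Ekeland's variational principle), sign extraction by testing with $-u_\pm^\mp$ via Proposition \ref{pro}, then Propositions \ref{apb}, \ref{reg}, and \ref{smp} with a comparison function of the form $g(t)=C(t^+)^{p-1}$ built from \ref{h1} and \ref{h4} to land in $\pm{\rm int}(\cs_+)$. The only differences are cosmetic (your choice $g(t)=K|t|^{p-2}t$ versus the paper's $C(t^+)^{p-1}$, and the explicit use of the ray $\tau e_1$ to locate a point of negative energy, which is exactly how Lemma \ref{inf} is proved).
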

\begin{proof}
We focus on the positive solution and the truncated energy $\Phi_+$. By Lemma \ref{zer} and the subsequent discussion, we may assume that $\Phi_+$ has a strict local minimum at $0$ with $\Phi_+(0)=0$.
\vskip2pt
\noindent
Besides, by Lemma \ref{inf} we can find $\bar u\in\w$ s.t.\
\[\Phi_+(\bar u) < 0.\]
In fact, we can find $\rho\in(0,\|\bar u\|_{s,p})$ s.t.\
\beq\label{css1}
\inf_{\|u\|_{s,p}=\rho}\Phi_+(u) = \eta_\rho > 0.
\eeq
Indeed, since $0$ is a strict local minimizer, then we can find $\rho\in(0,\|\bar u\|_{s,p}/2)$ s.t.\ $\Phi_+(u)>0$ for all $u\in\w$, $0<\|u\|_{s,p}\le 2\rho$. Arguing by contradiction, assume that $\eta_\rho=0$. So, for all $n\in\N$ there exists $v_n\in\partial B_\rho(0)$ s.t.\
\[\Phi_+(v_n) \le \frac{1}{n}.\]
By Ekeland's variational principle \cite[Theorem 5.7]{MMP} there exists $w_n\in\overline{B}_{2\rho}(0)$ s.t.\
\[\Phi_+(w_n) \le \Phi_+(v_n), \ \|w_n-v_n\|_{s,p} \le \frac{1}{\sqrt{n}},\]
and for all $u\in\overline{B}_{2\rho}(0)\setminus\{w_n\}$ there holds
\[\Phi_+(w_n) < \Phi_+(u)+\frac{\|u-w_n\|_{s,p}}{\sqrt{n}}.\]
From the properties of the bounded sequences $(v_n)$, $(w_n)$ we have $\Phi_+(w_n)\to 0$, $\|w_n\|\to\rho$. Also, for all $n\in\N$ and all $\varphi\in\w$
\begin{align*}
\langle\Phi'_+(w_n),\varphi\rangle &= \lim_{t\to 0}\frac{\Phi_+(w_n+t\varphi)-\Phi_+(w_n)}{t} \\
&\ge -\lim_{t\to 0}\frac{\|t\varphi\|_{s,p}}{\sqrt{n}t} = -\frac{\|\varphi\|_{s,p}}{\sqrt{n}},
\end{align*}
while the reverse inequality follows replacing $\varphi$ with $-\varphi$. So we have
\[\|\Phi_+'(w_n)\|_{-s,p'} \le \frac{1}{\sqrt{n}},\]
where we have denoted by $\|\cdot\|_{-s,p'}$ the norm of the dual space $W^{-s,p'}(\Omega)$. By Lemma \ref{cer}, $\Phi_+$ satisfies the $(C)$-condition, hence, passing if necessary to a subsequence, we have $w_n\to w$ in $\w$. From the previous relations we deduce $\|w\|=\rho$, $\Phi_+(w)=0$. This contradicts the choice of $\rho$, and hence it proves \eqref{css1}.
\vskip2pt
\noindent
We will now apply the mountain pass theorem \cite[Theorem 5.40]{MMP}. Set
\[\Gamma = \big\{\gamma\in C([0,1],\w):\,\gamma(0)=0, \ \gamma(1)=\bar u\big\},\]
\[c_+ = \inf_{\gamma\in\Gamma}\max_{t\in[0,1]}\Phi_+(\gamma(t)).\]
Then, $c_+\ge\eta_\rho$ and there exists $u_+\in K_{c_+}(\Phi_+)$. Reasoning as in Subsection \ref{ss21}, we have for all $\varphi\in\w$
\beq\label{css2}
\langle\fpl u_+,\varphi\rangle = \int_\Omega f_+(x,u_+)\varphi\,dx.
\eeq
Choosing $\varphi=-u_+^-\in\w$ in \eqref{css2} and using Proposition \ref{pro}, we have
\begin{align*}
\|u_+^-\|^p &\le \langle\fpl u_+,-u_+^-\rangle \\
&= \int_{\{u_+<0\}}f_+(x,u_+)u_+\,dx = 0.
\end{align*}
So we have $u_+\ge 0$ in $\Omega$. In turn, we can replace $f_+$ with $f$ in \eqref{css2} and see that $u_+$ is a solution of \eqref{dir}. By Propositions \ref{apb}, \ref{reg} we have $u_+\in C^\alpha_s(\overline\Omega)$, in particular $u_+$ is continuous in $\overline\Omega$.
\vskip2pt
\noindent
Using ${\bf H}$ \ref{h4} as in Lemma \ref{zer}, for all $\eps>0$ we can find $\delta\in(0,\|u_+\|_\infty)$ s.t.\ for a.e.\ $x\in\Omega$ and all $t\in[0,\delta]$
\[|f(x,t)| \le \eps|t|^{p-1}.\]
Besides, for all $t\in(\delta,\|u_+\|_\infty]$ we have by ${\bf H}$ \ref{h1}
\begin{align*}
f(x,t) &\ge -c_0(1+t^{r-1}) \\
&\ge -c_0\Big(\frac{t^{p-1}}{\delta^{p-1}}+\|u_+\|_\infty^{r-p}t^{p-1}\Big) \ge -Ct^{p-1}.
\end{align*}
Combining the previous inequalities, we get for a.e.\ $x\in\Omega$ and all $t\in[0,\|u_+\|_\infty]$
\[f(x,t) \ge -Ct^{p-1}.\]
In conclusion we have that $u_+\in\w\cap C^0(\overline\Omega)$ satisfies in a weak sense
\[\begin{cases}
\fpl u_++Cu_+^{p-1} \ge 0 & \text{in $\Omega$} \\
u \ge 0 & \text{in $\R^N$,}
\end{cases}\]
while from $\Phi_+(u_+)=c_+$ it follows that $u_+\neq 0$. By Proposition \ref{smp} (with $g(t)=C(t^+)^{p-1}$) and \eqref{int} we have $u_+\in{\rm int}(\cs_+)$. Finally we point out that
\[\Phi(u_+) = \Phi_+(u_+) = c_+ > 0.\]
The argument for the negative solution $u_-\in -{\rm int}(\cs_+)$ is analogous.
\end{proof}

\begin{remark}\label{art}
Theorem \ref{css} simply represents an adaptation to the nonlinear, nonlocal framework of the ideas of \cite{AR}. No Morse theory is involved so far.
\end{remark}

\section{The third solution}\label{sec4}

\noindent
By Theorem \ref{css} we know that $\Phi$ has at least three critical points, namely $0$ and $u_\pm\in\pm{\rm int}(\cs_+)$ with energies
\[\Phi(u_\pm) = c_\pm > 0 = \Phi(0).\]
In what follows, we will prove the existence of a fourth critical point. Arguing by contradiction, let us assume:
\beq\label{abs}
K(\Phi) = \{0,u_+,u_-\}.
\eeq
We note that, whenever $u\in K(\Phi_+)\setminus\{0\}$, testing $\Phi_+'(u)$ with $-u^-$ and arguing as in Theorem \ref{css} we first find that $u\ge 0$ in $\Omega$, and then that $u\in{\rm int}(\cs_+)$. As a consequence, $u\in K(\Phi)$ and by \eqref{abs} we must have $u=u_+$. A similar argument runs for $\Phi_-$. Thus, under assumption \eqref{abs}, we also have
\[K(\Phi_\pm) = \{0,u_\pm\}.\]
We will now complete the picture by computing all the critical groups of $\Phi$ and $\Phi_\pm$. We begin with critical groups at infinity:

\begin{lemma}\label{cgi}
Let ${\bf H}$, \eqref{abs} hold. Then, for all $k\in\N$
\[C_k(\Phi,\infty) = C_k(\Phi_\pm,\infty) = 0.\]
\end{lemma}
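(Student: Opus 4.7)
Plan: For $\Phi$ (and, with minor modifications, for $\Phi_\pm$), I would prove that a sufficiently low sublevel set $\Phi^c$ is contractible; then, since $\w$ is itself contractible as a Banach space, Proposition \ref{shg} \ref{shg3} and \ref{shg4} combine to give $C_k(\Phi,\infty)=H_k(\w,\Phi^c)=0$ for every $k$. The contractibility of $\Phi^c$ is obtained through the standard radial deformation argument for superlinear functionals.

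Two pointwise ingredients drive the construction. First, by ${\bf H}$ \ref{h2}, for every fixed $u\in\w\setminus\{0\}$ one has $\Phi(\tau u)\to-\infty$ as $\tau\to+\infty$, exactly as in the proof of Lemma \ref{inf}. Second, by ${\bf H}$ \ref{h3}, the identity
\[\langle\Phi'(u),u\rangle-p\Phi(u) = -\int_\Omega\big(f(x,u)u-pF(x,u)\big)\,dx,\]
together with the lower bound $f(x,t)t-pF(x,t)\ge\beta|t|^q-C$ (valid for all $t$), yields a uniform estimate $\langle\Phi'(u),u\rangle\le p\Phi(u)+C_1$ for some constant $C_1>0$. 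Hence, fixing $c<\min\{-C_1/p,c_+,c_-\}$, every $u$ with $\Phi(u)\le c$ satisfies $\langle\Phi'(u),u\rangle<0$.

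From these two ingredients, the implicit function theorem furnishes a continuous map $\tau\colon\w\setminus\{0\}\to[1,\infty)$ characterized by $\Phi(\tau(u)u)=c$ when $\Phi(u)>c$ and $\tau(u)=1$ otherwise; uniqueness of the crossing time is forced by the strict radial decrease of $\tau\mapsto\Phi(\tau u)$ at any crossing (no rebound is possible). Then $H(t,u)=((1-t)+t\tau(u))u$ defines a strong deformation retraction of $\w\setminus\{0\}$ onto $\Phi^c$, so $\Phi^c$ inherits contractibility from $\w\setminus\{0\}$ (the latter being contractible as the complement of a point in an infinite-dimensional Banach space, by a classical theorem of Bessaga). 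For $\Phi_\pm$ I would first apply the sign-retraction $h(t,u)=u\mp tu^\mp$ to deform $\w$ onto the convex (hence contractible) cone $\pm\w_+$ without raising $\Phi_\pm$, using that $\|u^\pm\|_{s,p}\le\|u\|_{s,p}$; on $\pm\w_+$ the truncated functional coincides with the full $\Phi$, and the radial argument applies verbatim.

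The main obstacle is the strict radial descent $\langle\Phi'(u),u\rangle<0$ on the level set $\{\Phi=c\}$: it hinges on the uniformity of the constant $C_1$, which is exactly what the generalized Ambrosetti--Rabinowitz condition ${\bf H}$ \ref{h3} supplies (a pure $(p-1)$-superlinear behavior would not suffice). A subsidiary technicality for $\Phi_\pm$ is checking that the sign-retraction $h$ keeps the entire homotopy inside $\Phi_\pm^c$; this reduces to showing that $t\mapsto\|u^+-(1-t)u^-\|_{s,p}$ is non-increasing on $[0,1]$, a pointwise-in-$(x,y)$ estimate on the Gagliardo kernel that is specific to the nonlocal setting and must be verified by hand.
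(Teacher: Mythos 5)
Your proposal is correct and, for the untruncated functional $\Phi$, it is essentially the paper's own argument: the same two ingredients (radial divergence $\Phi(\tau v)\to-\infty$ from ${\bf H}$ \ref{h2}, and the transversality estimate $\langle\Phi'(u),u\rangle\le p\Phi(u)+C_1$ from ${\bf H}$ \ref{h3}, which forces every radial crossing of the level $c$ to be strictly downward and hence unique) drive a continuous crossing-time map and a radial deformation onto $\Phi^c$. The only cosmetic difference is the final topological packaging: you retract $W^{s,p}_0(\Omega)\setminus\{0\}$ directly onto $\Phi^c$ and invoke contractibility of the punctured space, whereas the paper retracts the exterior set $E=\{\tau v:\ \|v\|_{s,p}=1,\ \tau\ge 1\}$ onto $\Phi^c$ and separately onto $\partial B_1(0)$, then uses contractibility of the infinite-dimensional sphere; both routes feed into Proposition \ref{shg} \ref{shg3}--\ref{shg4} identically. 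Where you genuinely add something is the treatment of $\Phi_\pm$: the paper dismisses this with ``a similar argument holds,'' but the radial divergence \emph{fails} for $\Phi_+$ along directions $v\le 0$ (there $\Phi_+(\tau v)=\tau^p\|v\|_{s,p}^p/p\to+\infty$), so the argument is not literally the same. Your preliminary sign-retraction $u\mapsto u^+-(1-t)u^-$ onto the order cone, along which $\Phi_\pm$ does not increase, is exactly the right fix; the kernel monotonicity you flag does hold, by the case analysis on the signs of $u(x),u(y)$ (where both have the same sign one of the two increments vanishes, and in the mixed case the increments of $u^+$ and $-u^-$ have the same sign), after which the radial argument runs inside the cone and the relative homology of the pair is unchanged. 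So your write-up is, on this point, more complete than the paper's.
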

\begin{proof}
We focus on $\Phi$. Since $\Phi$ is sequentially weakly l.s.c.\ and $\w$ is a reflexive space, we have
\[\inf_{u\in\overline{B}_1(0)}\Phi(u) = m > -\infty.\]
Also, from Lemma \ref{inf} we know that $\Phi$ is globally unbounded from below. We claim that for all $v\in\partial B_1(0)$,
\beq\label{cgi1}
\lim_{\tau\to\infty}\Phi(\tau v) = -\infty.
\eeq
Indeed, by $({\bf H}) $ \ref{h2}, for all $M>0$ there exists $T>0$ s.t.\ for a.e.\ $x\in\Omega$ and all $|t|>T$
\[F(x,t) \ge M|t|^p.\]
Besides, by ${\bf H}$ \ref{h1} we have for all $|t|\le T$
\begin{align*}
F(x,t) &\ge -c_0\int_0^{|t|}(1+\tau^{r-1})\,d\tau \\
&\ge -c_0\Big(T+\frac{T^r}{r}\Big).
\end{align*}
So we can find $C_M>0$ s.t.\ for a.e.\ $x\in\Omega$ and all $t\in\R$
\[F(x,t) \ge M|t|^p-C_M.\]
Now fix $v\in\partial B_1(0)$. For all $\tau>0$ we have
\begin{align*}
\Phi(\tau v) &\le \frac{\tau^p\|v\|_{s,p}^p}{p}-\int_\Omega\big(M|\tau v|^p-C_M\big)\,dx \\
&\le \tau^p\Big(\frac{1}{p}-\frac{M}{\lambda_1}\Big)+C,
\end{align*}
where $\lambda_1>0$ is defined as in Proposition \ref{pev}. Choosing $M>\lambda_1/p$, the latter tends to $-\infty$ as $\tau\to\infty$, which proves \eqref{cgi1}.
\vskip2pt
\noindent
Next we prove that there exists $c<m$ s.t.\ for all $u\in\w$, $\Phi(u)=c$ we have
\beq\label{cgi2}
\langle\Phi'(u),u\rangle < 0.
\eeq
Indeed, by ${\bf H}$ \ref{h3} we can find $\beta,T>0$ s.t.\ for a.e.\ $x\in\Omega$ and all $|t|>T$
\[f(x,t)t-pF(x,t) \ge \beta|t|^q.\]
Let $c<m$ (to be determined later) and $\Phi(u)=c$, then using as well ${\bf H}$ \ref{h1} we get
\begin{align*}
\langle\Phi'(u),u\rangle &= \|u\|_{s,p}^p-\int_\Omega f(x,u)u\,dx \\
&= p\Phi(u)-\int_\Omega\big(f(x,u)u-pF(x,u)\big)\,dx \\
&\le pc-\int_{\{|u|>T\}}\beta|u|^q\,dx-\int_{\{|u|\le T\}}\big(f(x,u)u-pF(x,u)\big)\,dx \\
&\le pc-\beta\|u\|_q^q+\int_{\{|u|\le T\}}C(1+|u|^r)\,dx \\
&\le pc+C(1+T^r)|\Omega|,
\end{align*}
with $C>0$ independent of $u$. So choose
\[c < \min\Big\{m,\,-\frac{C(1+T^r)|\Omega|}{p}\Big\},\]
and we plug this into the previous inequality to find \eqref{cgi2}.
\vskip2pt
\noindent
Set for all $v\in\partial B_1(0)$, $\tau\ge 1$
\[\eta_v(\tau) = \Phi(\tau v).\]
Then $\eta_v\in C^1([1,\infty))$ with derivative given for all $\tau\ge 1$ by
\[\eta'_v(\tau) = \langle\Phi'(\tau v),v\rangle.\]
Because of the choice of the constant $c<m$, we have
\[\eta_v(1) = \Phi(v) > c,\]
while by \eqref{cgi1}
\[\lim_{\tau\to\infty}\eta_v(\tau) = -\infty,\]
so there exists $\tau\in(1,\infty)$ s.t.\ $\eta_v(\tau)=c$. In turn, by \eqref{cgi2} this implies
\[\eta'_v(\tau) = \frac{1}{\tau}\langle\Phi'(\tau v),\tau v\rangle < 0.\]
So $\tau>1$ is unique, otherwise we would have at least one $\tau$ s.t.\ $\eta_v(\tau)=c$ and $\eta'_v(\tau)\ge 0$. By the implicit function theorem \cite[Theorem 7.3]{MMP} we can find a mapping $\mu\in C(\partial B_1(0),(1,\infty))$ s.t.\ for all $v\in\partial B_1(0)$ and all $\tau\in(1,\infty)$
\[\Phi(\tau v) \ \begin{cases}
> c & \text{if $\tau<\mu(v)$} \\
= c & \text{if $\tau=\mu(v)$} \\
< c & \text{if $\tau>\mu(v)$.}
\end{cases}\]
In particular we have
\[\Phi^c = \big\{\tau v:\,v\in\partial B_1(0),\,\tau\ge\mu(v)\big\}.\]
Also set
\[E = \big\{\tau v:\,v\in\partial B_1(0),\,\tau\ge 1\big\}.\]
We will now construct some continuous deformations between subsets of $\w$. First set for all $(t,\tau v)\in[0,1]\times E$
\[h(t,\tau v) = \begin{cases}
(1-t)\tau v+t\mu(v)v & \text{if $\tau<\mu(v)$} \\
\tau v & \text{if $\tau\ge\mu(v)$.}
\end{cases}\]
Then, $h:[0,1]\times E\to E$ is a continuous mapping s.t.\ for all $\tau v\in E$
\[h(0,\tau v) = \tau v, \ h(1,\tau v) \in \Phi^c,\]
while for all $t\in[0,1]$ and $\tau v\in\Phi^c$
\[h(t,\tau v) = \tau v.\]
Therefore, $\Phi^c$ is a (strong) deformation retract of $E$. Further, set for all $(t,\tau v)\in[0,1]\times E$
\[\tilde h(t,\tau v) = (1-t)\tau v+tv,\]
so $\tilde h:[0,1]\times E\to E$ is continuous s.t.\ for all $\tau v\in E$
\[\tilde h(0,\tau v) = \tau v, \ \tilde h(1,\tau v)\in\partial B_1(0),\]
while for all $t\in[0,1]$ and $v\in\partial B_1(0)$
\[\tilde h(t,v) = v.\]
So, $\partial B_1(0)$ is as well a (strong) deformation retract of $E$. In view of Proposition \ref{shg} \ref{shg1} \ref{shg3} \ref{shg4} we have for all $k\in\N$
\begin{align*}
H_k(\w,\Phi^c) &= H_k(\w,E) \\
&= H_k(\w,\partial B_1(0)) = 0,
\end{align*}
where in the last passage we have used the contractibility of $\partial B_1(0)$ (this is due to ${\rm dim}(\w)=\infty$). Recall that
\[c < m \le \Phi(0) < \Phi(u_\pm),\]
hence by \eqref{abs} we have
\[c < \inf_{u\in K(\Phi)}\Phi(u).\]
Thus, the previous algebraic relation and the definition of critical groups at infinity (see Subsection \ref{ss22}) imply for all $k\in\N$
\[C_k(\Phi,\infty) = H_k(\w,\Phi^c) = 0.\]
A similar argument holds for $\Phi_\pm$.
\end{proof}

\begin{remark}\label{uni}
We note that the divergence relation \eqref{cgi1} is not uniform with respect to $v$, in general, since $\partial B_1(0)$ is not compact. For this reason $\Phi$, is not anti-coercive in general.
\end{remark}

\noindent
Next we compute the critical points at $0$:

\begin{lemma}\label{cgz}
Let ${\bf H}$, \eqref{abs} hold. Then, for all $k\in\N$
\[C_k(\Phi,0) = C_k(\Phi_\pm,0) = \delta_{k,0}\R.\] 
\end{lemma}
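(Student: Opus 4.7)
The plan is to invoke Proposition \ref{cgp}\ref{cgp1} directly, since the ingredients have already been assembled. Recall that by Lemma \ref{zer} the origin is a local minimizer of each of the functionals $\Phi$ and $\Phi_\pm$, and by Lemma \ref{cer} all three functionals satisfy the $(C)$-condition, so the hypotheses of Proposition \ref{cgp}\ref{cgp1} are almost entirely in place. It remains only to verify that $0$ is an \emph{isolated critical point} and a \emph{strict} local minimizer for each functional.

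For strictness, I would simply appeal to the convention set just before Lemma \ref{inf}: if $0$ failed to be a strict local minimizer of $\Phi$ or of some $\Phi_\pm$, then it would be a limit of critical points of that functional, producing infinitely many elements of $K(\Phi)$ (for $\Phi_\pm$, one further uses the observation recorded right after \eqref{abs} that any nonzero element of $K(\Phi_\pm)$ belongs to $\pm\mathrm{int}(C_s^0(\overline\Omega)_+)$, hence is a critical point of $\Phi$). In either situation, Theorem \ref{main} would follow without further work, so we may safely assume $0$ is a strict local minimizer of each functional. For isolation, the contradiction hypothesis \eqref{abs} gives $K(\Phi)=\{0,u_+,u_-\}$ and $K(\Phi_\pm)=\{0,u_\pm\}$, and since $u_\pm\in\pm\mathrm{int}(C_s^0(\overline\Omega)_+)$ is bounded away from $0$ in $W^{s,p}_0(\Omega)$, the origin is indeed isolated in each critical set.

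With these verifications in place, a single application of Proposition \ref{cgp}\ref{cgp1} to each of $\Phi$, $\Phi_+$, $\Phi_-$ at the point $x=0$ yields
\[
C_k(\Phi,0) = C_k(\Phi_\pm,0) = \delta_{k,0}\R \quad \text{for all } k\in\N,
\]
which is the statement of Lemma \ref{cgz}. In fact there is no serious obstacle here: the lemma is essentially a bookkeeping step, packaging the minimum property established in Lemma \ref{zer} together with the standing hypotheses into the specific homological input required later for the Poincar\'e--Hopf computation that will deliver the third solution.
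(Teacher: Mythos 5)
Your proposal is correct and follows essentially the same route as the paper: both deduce from Lemma \ref{zer} and the contradiction hypothesis \eqref{abs} that $0$ is an isolated critical point and a strict local minimizer of $\Phi$ and $\Phi_\pm$, and then apply Proposition \ref{cgp} \ref{cgp1}. The extra verifications you spell out (the $(C)$-condition and the reduction of non-strictness to the existence of infinitely many critical points) are consistent with the paper's standing convention stated before Lemma \ref{inf}.
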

\begin{proof}
Again we consider $\Phi$. We know from Lemma \ref{zer} that $0$ is a local minimizer of $\Phi$, while by \eqref{abs} clearly $0$ is an isolated critical point. In particular, $0$ is a strict local minimizer of $\Phi$. By Proposition \ref{cgp} \ref{cgp1}, then, we have for all $k\in\N$
\[C_k(\Phi,0) = \delta_{k,0}\R.\]
A similar argument stands for $\Phi_\pm$.
\end{proof}

\noindent
Finally we consider $u_\pm$:

\begin{lemma}\label{cgm}
Let ${\bf H}$, \eqref{abs} hold. Then, for all $k\in\N$
\[C_k(\Phi,u_\pm) = \delta_{k,1}\R.\] 
\end{lemma}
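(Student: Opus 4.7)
The plan is to prove the lemma for $u_+$; the case of $u_-$ is symmetric and uses $\Phi_-$ together with $-u_- \in \mathrm{int}(\cs_+)$. The strategy has two steps: first transfer the critical group computation from $\Phi$ to the truncated functional $\Phi_+$ via a homotopy invariance argument, and then compute the critical groups of $\Phi_+$ at $u_+$ by exploiting its mountain pass structure together with the critical groups at $0$ and at infinity computed in the previous two lemmas.

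For the transfer, I would consider the affine homotopy $\Psi_\tau = (1-\tau)\Phi_+ + \tau\Phi$ for $\tau \in [0,1]$, corresponding to the reaction $f_\tau = (1-\tau)f_+ + \tau f$ (so $\Psi_0 = \Phi_+$ and $\Psi_1 = \Phi$). Since $u_+ \in \mathrm{int}(\cs_+)$ gives $u_+(x) > 0$ on $\Omega$, we have $f_+(\cdot, u_+) = f(\cdot, u_+)$, so $u_+ \in K(\Psi_\tau)$ for every $\tau$; each $\Psi_\tau$ satisfies the $(C)$-condition by the argument of Lemma \ref{cer} applied to $f_\tau$ (which satisfies ${\bf H}$ with constants uniform in $\tau$). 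To apply Proposition \ref{inv} I need $u_+$ to be isolated as a critical point of the family \emph{uniformly} in $\tau$: suppose by contradiction $\tau_n \in [0,1]$ and $u_n \in K(\Psi_{\tau_n}) \setminus \{u_+\}$ with $u_n \to u_+$ in $\w$. The a priori bound of Proposition \ref{apb} and the global weighted H\"older regularity of Proposition \ref{reg}, applied to the reactions $f_{\tau_n}$, give a uniform bound on $(u_n)$ in $C_s^\alpha(\overline\Omega)$; passing to a subsequence, $u_n \to u_+$ in $\cs$. Since $\mathrm{int}(\cs_+)$ is open by \eqref{int}, $u_n \in \mathrm{int}(\cs_+)$ eventually, so $u_n > 0$ and $f_{\tau_n}(\cdot, u_n) = f(\cdot, u_n)$ on $\Omega$, yielding $u_n \in K(\Phi)$. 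By \eqref{abs}, $u_n \in \{0, u_+, u_-\}$, and proximity to $u_+$ forces $u_n = u_+$, a contradiction. Proposition \ref{inv} then gives $C_k(\Phi, u_+) = C_k(\Phi_+, u_+)$ for every $k \in \N$.

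For the second step, recall that under \eqref{abs} we have $K(\Phi_+) = \{0, u_+\}$, and by Theorem \ref{css}, $u_+$ is an isolated mountain pass critical point of $\Phi_+$ at the level $c_+ > 0$, with $K_{c_+}(\Phi_+) = \{u_+\}$. Lemma \ref{cgz} gives $C_k(\Phi_+, 0) = \delta_{k,0}\R$ and Lemma \ref{cgi} gives $C_k(\Phi_+, \infty) = 0$; the Poincar\'e-Hopf formula (Proposition \ref{cgp} \ref{cgp3}) therefore forces $\sum_{k \ge 0} (-1)^k \dim C_k(\Phi_+, u_+) = -1$. The sharp characterization of critical groups at isolated mountain pass critical points of $C^1$-functionals satisfying the Cerami condition (see \cite[Chapter 6]{MMP}) yields $C_k(\Phi_+, u_+) = \delta_{k,1}\R$. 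Combining with the transfer from the first step gives $C_k(\Phi, u_+) = \delta_{k,1}\R$, and the symmetric argument for $u_-$ completes the proof.

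The main obstacle, in my view, is the second step: the mountain pass theorem by itself only guarantees $C_1(\Phi_+, u_+) \neq 0$, and excluding higher-degree critical groups requires the finer $C^1$-Morse-theoretic analysis at mountain pass points, since for $p \neq 2$ the functional $\Phi_+$ is not $C^2$ and no classical Morse lemma at $u_+$ is available. The uniform isolation in the first step is also delicate: it hinges on the regularity results of Propositions \ref{apb} and \ref{reg} being uniform in the reaction $f_\tau$, which is what upgrades weak convergence in $\w$ of critical points of the homotopy to strong convergence in $\cs$, so that the strict positivity $u_+ \in \mathrm{int}(\cs_+)$ can be exploited to bring the $u_n$ back into $K(\Phi)$.
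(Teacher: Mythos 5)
Your first step is exactly the paper's argument: the affine homotopy $\Psi_\tau$ between $\Phi$ and $\Phi_+$, the uniform isolation of $u_+$ obtained from the a priori bounds of Proposition \ref{apb}, the weighted H\"older estimates of Proposition \ref{reg}, the compact embedding into $\cs$, and the openness of ${\rm int}(\cs_+)$, followed by Proposition \ref{inv}. The gap is in your second step. You reduce the computation of $C_k(\Phi_+,u_+)$ to the Poincar\'e--Hopf identity $\sum_k(-1)^k\dim C_k(\Phi_+,u_+)=-1$ plus an appeal to a ``sharp characterization of critical groups at isolated mountain pass points'' of $C^1$ functionals satisfying the $(C)$-condition. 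No such characterization exists at the level of generality you need: for a functional that is merely $C^1$ (as $\Phi_+$ is here --- you correctly note there is no Morse lemma when $p\neq 2$), the Hofer-type results in \cite[Chapter 6]{MMP} only yield $C_1(\Phi_+,u_+)\neq 0$ at a mountain pass point, while the full statement $C_k=\delta_{k,1}\R$ requires $C^2$ smoothness and Fredholm/nondegeneracy hypotheses on the second derivative. The two facts you do have are not enough to determine the groups: for instance $C_1=\R^2$, $C_2=\R$ and all other groups trivial is compatible with both $C_1\neq 0$ and alternating sum $-1$. There is also a circularity in invoking Proposition \ref{cgp} \ref{cgp3} for $\Phi_+$ at this stage, since its hypotheses require that \emph{all} critical groups, including those at $u_+$, be finite-dimensional and eventually vanishing --- which is part of what you are trying to establish.

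The paper closes this step by a purely homological argument that uses no mountain pass structure at $u_+$ at all. Fix $b<0<a<\Phi_+(u_+)$. Since $K(\Phi_+)=\{0,u_+\}$ under \eqref{abs}, Proposition \ref{cgp} \ref{cgp2} together with Lemmas \ref{cgi} and \ref{cgz} gives $H_k(\w,\Phi_+^b)=C_k(\Phi_+,\infty)=0$, $H_k(\Phi_+^a,\Phi_+^b)=C_k(\Phi_+,0)=\delta_{k,0}\R$, and $H_k(\w,\Phi_+^a)=C_k(\Phi_+,u_+)$. The long exact sequence of Proposition \ref{shg} \ref{shg5} applied to $A=\Phi_+^a$, $B=\Phi_+^b$ then reads
\[\delta_{k,0}\R \ \xrightarrow{i_*} \ 0 \ \xrightarrow{j_*} \ C_k(\Phi_+,u_+) \ \xrightarrow{h_*\circ\partial} \ \delta_{k-1,0}\R \ \xrightarrow{i_*} \ 0,\]
and exactness forces $h_*\circ\partial$ to be an isomorphism, whence $C_k(\Phi_+,u_+)=\delta_{k-1,0}\R=\delta_{k,1}\R$. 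This is the missing idea: the groups at $u_+$ are squeezed out of the already-computed groups at $0$ and at infinity, rather than read off from the mountain pass geometry.
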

\begin{proof}
We focus on $u_+\in{\rm int}(\cs_+)$. By \eqref{abs}, $u_+$ is an isolated critical point for both $\Phi$ and $\Phi_+$. We first claim that for all $k\in\N$
\beq\label{cgm1}
C_k(\Phi,u_+) = C_k(\Phi_+,u_+).
\eeq
Indeed, set for all $\tau\in[0,1]$ and all $u\in\w$
\begin{align*}
\Psi_\tau(u) &= (1-\tau)\Phi(u)+\tau\Phi_+(u) \\
&= \frac{\|u\|_{s,p}^p}{p}-\int_\Omega\big((1-\tau)F(x,u)+\tau F_+(x,u)\big)\,dx.
\end{align*}
By ${\bf H}$ \ref{h1}, for all $\tau\in[0,1]$ the Carath\'eodory mapping $(1-\tau)f+\tau f_+:\Omega\times\R\to\R$ satisfies ${\bf H}_0$ (with constants independent of $\tau$). So, as seen in Subsection \ref{ss21}, $\Psi_\tau\in C^1(\w)$ with derivative given for all $u,\varphi\in\w$ by
\[\langle\Psi'_\tau(u),\varphi\rangle = \langle\fpl u,\varphi\rangle-\int_\Omega\big((1-\tau)f(x,u)+\tau f_+(x,u)\big)\varphi\,dx.\]
Reasoning as in Lemma \ref{cer} we see that $\Psi_\tau$ satsfies the $(C)$-condition. We prove now that $u_+$ is a critical point of $\Psi_\tau$, isolated uniformly with respect to $\tau\in[0,1]$. Indeed, clearly we have for all $\tau\in[0,1]$
\[\Psi'_\tau(u_+) = (1-\tau)\Phi'(u_+)+\tau\Phi'_+(u_+) = 0.\]
Arguing by contradiction, let $(\tau_n)$, $(u_n)$ be sequences in $[0,1]$, $\w\setminus\{u_+\}$ respectively, s.t.\ $u_n\to u_+$ in $\w$ as $n\to\infty$, and for all $n\in\N$ we have $u_n\in K(\Psi_{\tau_n})$, i.e., $u_n\in\w$ solves in a weak sense
\beq\label{cgm2}
\fpl u_n = (1-\tau_n)f(x,u_n)+\tau_n f_+(x,u_n).
\eeq
By ${\bf H}$ \ref{h1} there exists $C>0$ (independent of $n$) s.t.\ for all $n\in\N$, a.e.\ $x\in\Omega$, and all $t\in\R$
\[|(1-\tau_n)f(x,t)+\tau_n f_+(x,t)| \le C(1+|t|^{r-1}).\]
Since the sequence $(u_n)$ is bounded in $\w$, by Proposition \ref{apb} $(u_n)$ is bounded in $L^\infty(\Omega)$ as well. Further, by Proposition \ref{reg} $(u_n)$ is bounded in $C^\alpha_s(\overline\Omega)$. By the compact embedding $C^\alpha_s(\overline\Omega)\hookrightarrow\cs$, passing to a subsequence we have $u_n\to u_+$ in $\cs$.
\vskip2pt
\noindent
Since $u_+\in{\rm int}(\cs_+)$, for all $n\in\N$ big enough we have $u_n\in{\rm int}(\cs_+)$. So, \eqref{cgm2} rephrases as
\[\fpl u_n = f(x,u_n) \ \text{in $\Omega$.}\]
Thus we have
\[u_n \in K(\Phi)\setminus\{0,u_\pm\},\]
against \eqref{abs}.
\vskip2pt
\noindent
Since $u_+$ is uniformly isolated as a critical point of $\Psi_\tau$ for all $\tau\in[0,1]$, then we can apply Proposition \ref{inv} and see that $C_k(\Psi_\tau,u_+)$ is independent of $\tau\in[0,1]$. Noting that $\Psi_0=\Phi$ and $\Psi_1=\Phi_+$, we get \eqref{cgm1}.
\vskip2pt
\noindent
Next we prove that for all $k\in\N$
\beq\label{cgm3}
C_k(\Phi_+,u_+) = \delta_{k,1}\R.
\eeq
Indeed, fix $a,b\in\R$ s.t.\
\[b < 0 < a < \Phi_+(u_+).\]
By Proposition \ref{cgp} \ref{cgp2}, assumption \eqref{abs}, and Lemmas \ref{cgi}, \ref{cgz}, we have for all $k\in\N$ the following algebraic relations:
\[H_k(\w,\Phi^b_+) = C_k(\Phi_+,\infty) = 0,\]
\[H_k(\Phi_+^a,\Phi_+^b) = C_k(\Phi_+,0) = \delta_{k,0}\R,\]
\[H_k(\w,\Phi^a_+) = C_k(\Phi_+,u_+).\]
We apply Proposition \ref{shg} \ref{shg5} with $A=\Phi_+^a$, $B=\Phi_+^b$. With the obvious meaning of the mappings, the following sequence is exact:
\[\ldots \ H_k(\Phi_+^a,\Phi_+^b) \ \xrightarrow{i_*} \ H_k(\w,\Phi_+^b) \ \xrightarrow{j_*} \ H_k(\w,\Phi_+^a) \ \xrightarrow{h_*\circ\partial} \ H_{k-1}(\Phi_+^a,\Phi_+^b) \ \ldots\]
In view of the relations above we have exactness of the following sequence
\[\delta_{k,0}\R \ \xrightarrow{i_*} \ 0 \ \xrightarrow{j_*} \ C_k(\Phi_+,u_+) \ \xrightarrow{h_*\circ\partial} \ \delta_{k-1,0}\R \ \xrightarrow{i_*} \ 0.\]
Then $\partial$ is an isomporhism, i.e., we have
\[C_k(\Phi_+,u_+) = \delta_{k-1,0}\R = \delta_{k,1}\R,\]
which proves \eqref{cgm3}. By \eqref{cgm1} and \eqref{cgm3} we conclude.
\vskip2pt
\noindent
A similar argument, involving the functional $\Phi_-$ in the place of $\Phi_+$, holds for $u_-$.
\end{proof}

\noindent
We are now in a favorable position to prove our main result.
\vskip4pt
\noindent
{\em Proof of Theorem \ref{main}.} We argue under assumption \eqref{abs}. We recall the Poincar\'e-Hopf formula (Proposition \ref{cgp} \ref{cgp3}), which in this case reads as
\[\sum_{k=0}^\infty(-1)^k\big[{\rm dim}(C_k(\Phi,0))+{\rm dim}(C_k(\Phi,u_+))+{\rm dim}(C_k(\Phi,u_-))\big] = \sum_{k=0}^\infty(-1)^k{\rm dim}(C_k(\Phi,\infty)).\]
In addition, from Lemmas \ref{cgi}, \ref{cgz}, and \ref{cgm} we know the critical groups of $\Phi$ at all of its critical points, and at infinity, so we have
\[\sum_{k=0}^\infty(-1)^k(\delta_{k,0}+2\delta_{k,1}) = 0,\]
which reduces to $-1=0$, a contradiction. Thus \eqref{abs} fails, i.e., there exists
\[\tilde u\in K(\Phi)\setminus\{0,u_\pm\}.\]
We already know from Theorem \ref{css} that $u_\pm\in\pm{\rm int}(\cs_+)$ solve \eqref{dir}. Similarly, since $\tilde u\in K(\Phi)$ we apply Propositions \ref{apb} and \ref{reg} to see that $\tilde u\in C^\alpha_s(\overline\Omega)\setminus\{0\}$ and it solves \eqref{dir}. \qed

\begin{remark}\label{sign}
In general, our method bears no sign information about the solution $\tilde u$. Such information can be retrieved in the $(p-1)$-sublinear case, by using the sub-supersolution method introduced in \cite{FI}, but we are not aware of such a result for the superlinear case. We also refer the reader to \cite[Theorem 4.1]{IMS}. Such result, which is formally analogous to Theorem \ref{main}, is proved for the sublinear case but with a completely different underlying variational structure: in \cite{IMS} the constant sign solutions are local minimizer of the energy functional and the third solution is a mountain pass, while here the constant sign solutions are mountain passes and the third is detected via Morse theory.
\end{remark}

\vskip4pt
\noindent
{\small {\bf Acknowledgement.} The first and third authors are members of GNAMPA (Gruppo Nazionale per l'Analisi Matematica, la Probabilit\`a e le loro Applicazioni) of INdAM (Istituto Nazionale di Alta Matematica 'Francesco Severi'). A.\ Iannizzotto is partially supported by the research project {\em Problemi non locali di tipo stazionario ed evolutivo} (GNAMPA, CUP E53C23001670001). This work was partially performed during V.\ Staicu's visit to the University of Cagliari, within the INdAM Visiting Professors Program (May, 2023). V.\ Vespri is supported by the MUR-PRIN 2022 project {\em Detokode - DEsign a governance for the TOKenised ecOnomy in a Decentralised Era}. We would like to thank the anonymous Referee for careful reading of our work.}

\end{document}